\pgfplotsset{compat=1.18}
\newcommand{\normmm}[1]{{\left\vert\kern-0.25ex\left\vert\kern-0.25ex\left\vert #1 
		\right\vert\kern-0.25ex\right\vert\kern-0.25ex\right\vert}}
\newtheorem{theorem}{Theorem}[section]
\newtheorem*{theorem*}{Theorem B}
\newtheorem{lemma}[theorem]{Lemma}
\newtheorem{proposition}[theorem]{Proposition}
\newtheorem*{observation*}{Observation}
\newtheorem*{assumption*}{Assumption}
\newtheorem*{question*}{Question}
\newtheorem*{Theorem K}{Kahane's theorem}
\theoremstyle{definition}
\newtheorem*{definition*}{Definition}
\theoremstyle{remark}
\newtheorem{remark}{Remark}[section]
\newtheorem*{remark*}{Remark}
\newtheorem{problem}{Problem}
\newcommand{\R}{\mathbb{R}}
\newcommand{\Z}{\mathbb{Z}}
\newcommand{\E}{\mathbb{E}}
\newcommand{\T}{\mathbb{T}}
\newcommand{\PP}{\mathbb{P}}
\newcommand{\supp}{\mathrm{supp}}
\newcommand{\RC}{\textnormal{RC}}
\newcommand{\an}{\text{\quad and \quad}}
\newcommand{\indi}{\mathds{1}}
\numberwithin{equation}{section}
\begin{document}
\title{Salem properties of Dvoretzky random coverings}

\author{Yukun Chen}
\address{Yukun Chen: School of Mathematics and Statistics, Wuhan University, Wuhan 430072, China}
\email{yukunchen@whu.edu.cn}

\author{Xiangdi Fu}
\address{Xiangdi Fu: School of Fundamental Physics and Mathematical Sciences, HIAS, University of Chinese Academy of Sciences, Hangzhou, 310024, China}
\email{xdfu@ucas.ac.cn}

\author{Zhaofeng Lin}
\address{Zhaofeng Lin: School of Fundamental Physics and Mathematical Sciences, HIAS, University of Chinese Academy of Sciences, Hangzhou, 310024, China}
\email{linzhaofeng@ucas.ac.cn}

\author{Yanqi Qiu}
\address{Yanqi Qiu: School of Fundamental Physics and Mathematical Sciences, HIAS, University of Chinese Academy of Sciences, Hangzhou, 310024, China}
\email{yanqiqiu@ucas.ac.cn}


	\begin{abstract}
	 We establish the Salem properties for the uncovered sets in the celebrated Dvoretzky random coverings of the unit circle. 
	\end{abstract}

	\subjclass[2020]{Primary 60G57, 42A61, 46B09; Secondary 60G46}
\keywords{Dvoretzky covering problem, Fourier dimension, Salem set, $L^1$-modulus of continuity, mean-oscillation of sample paths}

	\maketitle

	\setcounter{tocdepth}{2}

	\setcounter{tocdepth}{0}
	\setcounter{equation}{0}




\section{Introduction}
In this paper, we establish the Salem properties for the uncovered sets in the celebrated Dvoretzky random coverings of the unit circle. Our proof relies on a {\it translation-cancellation technique}, combined with the {\it smallness of  $L^1$-modulus of continuity} and the {\it smallness of mean-oscillation} of sample paths,  all incorporated into the vector-valued martingale framework for Fourier decay.   
These new ingredients enable us, in particular, to handle multiplicative chaos measures associated with $T$-martingales that lack certain forms of {\it weak spatial independence} (see \S \ref{sec-new-ingre} for more explanations of weak spatial independence).   Our approach is elementary and the paper is written in a self-contained way.

We mention that,  due to the lack of the weak spatial independence in the model of Dvoretzky random coverings, neither  the core arguments in \cite{CHQW24} nor those in \cite{LQT25II} can be applied in the current situation; meanwhile,   due to the lack of  the smallness of  $L^1$-modulus of continuity and the smallness of mean-oscillation of sample paths,  the new ingredients in this paper can not be applied to the models in \cite{CHQW24, LQT25II}.

\subsection{Dvoretzky random coverings}
We identify the unit circle \(\mathbb{T} = \mathbb{R}/\mathbb{Z}\) with the unit interval \([0,1)\). Throughout this paper, we denote by \( \ell = (\ell_k)_{k\ge 1}\) a {\it non-increasing} sequence satisfying
$
0<\ell_k < 1$ and $\sum_{k=1}^\infty \ell_k = \infty$,  and let \(\omega = (\omega_k)_{k\ge 1}\) be  i.i.d. random variables uniformly distributed on \(\mathbb{T}\). We denote by $I_k = I_k(\omega)$ the open random arcs 
\(
I_k = (\omega_k, \omega_k + \ell_k)  \subset \T  = \R/\Z, 
\)
and by
\[ E_\ell:= \limsup I_k\]
the random set consisting of all points on $\T$ that are covered by $I_k$ infinitely often. As an immediate consequence of the Borel--Cantelli lemma, the complement of $E_\ell$ has Lebesgue measure zero almost surely.
The celebrated Dvoretzky covering problem \cite{Dvo56} asks for which sequences $\ell$ does one have
\[ 
\PP \big(E_\ell=\T \big) = 1,
\] meaning that, almost surely, \emph{every} point on $\T$ is covered infinitely often by the arcs $I_k$.  

Dvoretzky covering problem attracted the attention of P. L\'evy, J.-P. Kahane, P. Erd\H{o}s, P. Billard, and B. Mandelbrot (see \cite[Chapter 11]{Kah00}), and the complete solution was given by L. Shepp \cite{She72}: 
\begin{align}\label{eq-Shepp-cond}
E_\ell = \T\,\, \text{a.s.} \,\, \text{if and only if}\,\,\sum_{k=1}^\infty k^{-2}\exp\big(\ell_1 + \cdots +  \ell_k \big)=\infty.
\end{align}
We should mention that the canonical case $\ell_k= \alpha/k$ with $\alpha> 0$, was established before Shepp's work. The result for $\alpha>1$ was proved by Kahane \cite{Kah59}, and that for $\alpha<1$ was proved by Billard \cite{Bil65}. Billard also proved that for $\alpha = 1$, the set $\T \setminus E_\ell$ is at most countable, and then Mandelbrot \cite{Man72} and Orey (unpublished) independently showed that $E_\ell = \T$ in this critical case. We refer the reader to \cite{Kah00} for additional historical remarks, and to \cite{BarFan05,Fan02, FanWu04, FFS85, JonSte08, Kah85} for further studies on this topic.

\subsection{Main result}
In this paper, we are interested in the complement of \(E_\ell\), which we denote by 
\begin{align}\label{def-K-l}
K_\ell := \T \setminus E_\ell  =\T\setminus \limsup_k I_k. 
\end{align}
Conventionally, the set \(K_\ell\) is usually referred to as the {\it uncovered set} (more precisely, the set of points not covered infinitely many times). By Kolmogorov's zero-one law and Shepp's criterion \eqref{eq-Shepp-cond},  one has $\PP(K_\ell\neq \emptyset)>0$ if and only if $\PP(K_\ell\neq \emptyset)=1$ if and only if
\begin{align}\label{eq-non-Shepp}
 \sum_{k=1}^\infty k^{-2}\exp\big(\ell_1 + \cdots +  \ell_k \big)<\infty.
\end{align}
Therefore, the study of $K_\ell$ naturally assumes \eqref{eq-non-Shepp} (otherwise $K_\ell$ is empty almost surely). In the non-empty case, $K_\ell$ exhibits a certain fractal structure \cite{Man82}, and it is always small in both the sense of Lebesgue measure and Baire category. It is natural to use several fractal dimensions to describe its size and geometric properties. In particular, Kahane \cite[Theorem 4, page 160]{Kah85} determined the Hausdorff dimension of \(K_\ell\), and the following quantity plays an important role in his work: for the sequence $\ell = (\ell_k)_{k\ge 1}$, define
\begin{align}\label{def-D-l}
D_\ell := \limsup_{k\to\infty} \frac{\ell_1   + \cdots + \ell_k}{\log k}. 
\end{align}
It is clear that $D_\ell <1$ implies \eqref{eq-non-Shepp}. Moreover, a weaker converse holds (see Appendix \ref{appendix:non-shepp-D_l<=1}):
\begin{align*}
\text{$\eqref{eq-non-Shepp}\Longrightarrow D_\ell\leq 1$}
\end{align*}

\begin{Theorem K}\label{thm-Kahane}
 Assuming \eqref{eq-non-Shepp}, the uncovered set $K_\ell$ is almost surely non-empty with Hausdorff dimension $$\dim_{\mathcal H} K_\ell = 1-D_\ell.$$
\end{Theorem K}

\begin{remark}\label{rem-0dim}
	In Kahane's original statement of the theorem, the assumption $D_\ell<1$ was imposed to conclude that $\dim_{\mathcal H} K_\ell=1-D_\ell$. In fact, this assumption can be weakened to the natural assumption~\eqref{eq-non-Shepp}. This fact may be well-known to experts; we include a proof in the Appendix \ref{appendix:Kahane-remark} for completeness.
\end{remark}

Besides the Hausdorff dimension, the Fourier dimension is another important notion that reflects the geometric properties of a fractal set from the viewpoint of harmonic analysis. For a Borel set $A\subset \T$, its Fourier dimension is defined by (see \cite[Section~8.2]{BSS23} and \cite[Chapter~17]{Kah85})
\begin{equation*}
\dim_{\mathcal F}(A):=\sup\Bigl\{0<\tau\leq 1:\ \exists \,\mu \in \mathcal M_{+}(A) \text{ such that } |\widehat{\mu}(n)|^2= O\big(n^{-\tau}\big) \Bigr\},
\end{equation*}
where $\mathcal M_{+}(A)$ denotes the set of non-zero finite  positive Borel measures on $\T$ that give full measure to $A$. In general, for any Borel set $A \subset \T$, it holds that 
\begin{align}\label{eq-F-H}
	\dim_{\mathcal F} A \leq \dim_{\mathcal H} A.
\end{align}
A Borel set is called {\it Salem} if its Fourier dimension and Hausdorff dimension coincide.  
For general theory and applications of these dimensions and Salem sets, we refer to \cite{BSS23,EkSch17,Kah85} and the references therein.  

Our main result is the following
\begin{theorem}\label{theorem-main}
	Assuming \eqref{eq-non-Shepp}, the uncovered set $K_\ell$ is almost surely Salem with Fourier dimension \[\dim_{\mathcal F} K_\ell = \dim_{\mathcal H} K_\ell = 1-D_\ell.\]
\end{theorem}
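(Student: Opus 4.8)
The plan is to establish the matching lower bound $\dim_{\mathcal F}K_\ell\ge 1-D_\ell$ almost surely; combined with \eqref{eq-F-H} and Kahane's theorem this shows $K_\ell$ is a.s.\ Salem. If $D_\ell=1$ there is nothing to prove, since $\dim_{\mathcal F}$ is non-negative while $\dim_{\mathcal H}K_\ell=0$; so assume $D_\ell<1$. The witnessing measure will be the \emph{uncovered measure} $\mu$, defined as the almost sure weak limit of the positive measure-valued ($T$-)martingale
\[
Q_j(t)\,dt,\qquad Q_j(t):=\frac{\mathbf{1}_{U_j}(t)}{\prod_{k\le j}(1-\ell_k)},\quad U_j:=\T\setminus\bigcup_{k\le j}I_k,
\]
which is a martingale for $\mathcal F_j:=\sigma(\omega_1,\dots,\omega_j)$ because $I_{j+1}$ is independent of $\mathcal F_j$ with $\PP(t\notin I_{j+1})=1-\ell_{j+1}$. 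Since the $U_j$ are decreasing closed sets with $\bigcap_jU_j=\T\setminus\bigcup_kI_k=K_\ell$ (Shepp's remark), $\mu$ is carried by $K_\ell$. Non-degeneracy $\mu\neq 0$ a.s.\ follows from a uniform second moment bound: the total masses $\int_{\T}Q_j\,dt$ form a non-negative, mean-one martingale, and for every $0<s<1-D_\ell$
\[
\sup_j\E\Big[\Big(\int_{\T}Q_j\,dt\Big)^{2}\Big]\;\le\;\sup_j\iint_{\T^{2}}\frac{\E[Q_j(t)Q_j(s)]}{|t-s|^{s}}\,dt\,ds\;<\;\infty ,
\]
because $\E[Q_j(t)Q_j(s)]=\prod_{k\le j}\PP(t,s\notin I_k)/(1-\ell_k)^2\lesssim_\varepsilon|t-s|^{-D_\ell-\varepsilon}$ uniformly in $j$ --- precisely the covariance estimate underlying Kahane's determination of $\dim_{\mathcal H}K_\ell$. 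Hence $\int_{\T}Q_j\,dt$ is $L^2$-bounded, converges to $\|\mu\|$ in $L^1$ with $\E\|\mu\|=1$, and the same estimate shows that a.s.\ $\mu$ has finite $s$-energy for every $s<1-D_\ell$ (re-deriving the Hausdorff lower bound).

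It then remains to prove that, for every $0<\tau<1-D_\ell$, almost surely $|\widehat\mu(\xi)|^2=O(|\xi|^{-\tau})$. By a standard Chebyshev--Borel--Cantelli argument this reduces to the moment bound
\[
\E\big[\,|\widehat\mu(\xi)|^{2p}\,\big]\;\lesssim_{p,\tau}\;|\xi|^{-p\tau}\qquad\text{for all }p\in\N,\ 0<\tau<1-D_\ell,\ |\xi|\ge 2 ;
\]
granted this, $\sum_\xi\PP\big(|\widehat\mu(\xi)|^2>|\xi|^{-\tau+\delta}\big)\lesssim\sum_\xi|\xi|^{-cp\delta}<\infty$ for $p$ large, and letting $\delta\downarrow 0$ and $\tau\uparrow 1-D_\ell$ along countable sequences gives $\dim_{\mathcal F}K_\ell\ge 1-D_\ell$ a.s.

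To prove the moment bound I would exploit the martingale structure through the vector-valued martingale framework for Fourier decay. Write $\widehat\mu(\xi)=\sum_{k\ge 1}\Delta_k(\xi)$ with $\Delta_k(\xi)=\int_{\T}e^{-2\pi i\xi t}Q_{k-1}(t)\big(W_k(t)-1\big)\,dt$ and $W_k(t):=\mathbf{1}_{t\notin I_k}/(1-\ell_k)$ --- a sum of $\mathcal F_k$-martingale differences, as $\widehat{Q_0}(\xi)=0$ for $\xi\neq 0$; by Burkholder--Davis--Gundy it suffices to control $\E\big[\big(\sum_{k}|\Delta_k(\xi)|^{2}\big)^{p}\big]$ and its analogues. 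The conditional variances $\E[|\Delta_k(\xi)|^{2}\mid\mathcal F_{k-1}]=\iint e^{-2\pi i\xi(t-s)}Q_{k-1}(t)Q_{k-1}(s)\,c_k(|t-s|)\,dt\,ds$, where $c_k(r)=\E[(W_k(t)-1)(W_k(s)-1)]$ is a piecewise linear kernel essentially supported on $|r|\lesssim\ell_k$ of size $\lesssim\ell_k$, force a dichotomy at the resolution scale $1/|\xi|$. For the \emph{fine} increments, $\ell_k\lesssim 1/|\xi|$, the arc scale lies below the oscillation scale, so one discards the oscillation and uses the covariance estimate to get $\E[|\Delta_k(\xi)|^{2}]\lesssim\ell_k\iint_{|t-s|<\ell_k}|t-s|^{-D_\ell-\varepsilon}\,dt\,ds\lesssim\ell_k^{\,2-D_\ell-\varepsilon}$; since $\ell$ is non-increasing with divergent sum, \eqref{appendix-implication} forces $\ell_k\lesssim(\log k)/k$, and a dyadic decomposition of $\{k:\ell_k\lesssim 1/|\xi|\}$ gives $\sum_{k\text{ fine}}\E[|\Delta_k(\xi)|^{2}]\lesssim|\xi|^{-(1-D_\ell)+o(1)}$. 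For the \emph{coarse} increments, $\ell_k\gtrsim 1/|\xi|$, one cannot lose the oscillation, and here the \emph{translation--cancellation technique} enters: with $h\asymp 1/|\xi|$ and $j$ marking the boundary between the two regimes,
\[
\big|\widehat{Q_j}(\xi)\big|\;=\;\frac{\big|\widehat{Q_j-\tau_hQ_j}(\xi)\big|}{|1-e^{-2\pi i\xi h}|}\;\le\;\tfrac12\big\|Q_j-\tau_hQ_j\big\|_{L^1},\qquad \tau_hf(t):=f(t-h),
\]
trading the oscillatory integral for the \emph{$L^1$-modulus of continuity} of the sample path $Q_j$ at scale $h$, which is small because $Q_j$ is the normalised indicator of a union of large, heavily overlapping arcs. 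To promote this to a bound on $2p$-th moments, and to interlace it with the martingale levels, one must in addition control at each intermediate level $k$ the local quantity $\iint_{|t-s|<\ell_k}Q_{k-1}\,dQ_{k-1}$ --- the \emph{mean-oscillation} of the sample path at the arc scale --- and feed everything through the vector-valued martingale estimate; optimising the split between the two regimes then yields the desired $\E[|\widehat\mu(\xi)|^{2p}]\lesssim_{p,\tau}|\xi|^{-p\tau}$ for every $\tau<1-D_\ell$.

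The decisive obstacle is that the Dvoretzky model possesses \emph{no weak spatial independence}: for distinct $t,s$ the weights $W_k(t)$ and $W_k(s)$ are \emph{never} independent, only negatively correlated, so the decoupling that underlies the cascade-type arguments of \cite{CHQW24,LQT25II} is unavailable. The translation--cancellation technique, together with the $L^1$-modulus-of-continuity and mean-oscillation estimates, is exactly the device that replaces spatial independence by a quantitative pathwise regularity of the martingale approximations $Q_j$. I expect the hard part to be the sharp form of these pathwise estimates, needed to extract \emph{genuine polynomial} (rather than merely $o(1)$) decay from the coarse regime --- where the arcs are large and the lack of spatial independence is most acute --- and to control the higher moments $\E\big[\big(\sum_{k}|\Delta_k(\xi)|^{2}\big)^{p}\big]$ with uniform constants.
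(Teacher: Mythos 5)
Your proposal starts from the right measure (your $\mu$ and $Q_j$ are the paper's $\mu_{\text{RC}}$ and $M_j$), makes the right reduction to $D_\ell<1$, and, importantly, correctly identifies the translation--cancellation trick together with the smallness of the $L^1$-modulus of continuity of the sample paths as the device that replaces the missing spatial independence. But the analytic scaffolding you put around these ideas differs substantially from the paper's, and it leaves the decisive step open.

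The paper does \emph{not} prove a fixed-frequency bound $\E[|\widehat\mu(\xi)|^{2p}]\lesssim|\xi|^{-p\tau}$ for large $p$ and then run Chebyshev--Borel--Cantelli. It works with the genuinely $\ell^q$-valued martingale $\mathbf M_k=\big(n^{\tau/2}\widehat{\mu_k}(n)\big)_{n\ge 1}$ and applies Pisier's martingale type-$p$ inequality, which is valid precisely for $1<p\le 2$ and reduces everything to $\sum_k\E\big[\|\mathbf D_k\|_{\ell^q}^p\big]<\infty$ (Proposition~\ref{prop-vector-valued-martingale}). The price of taking $p$ large in your scheme is paid in the paper by taking $q$ large instead; because $p\le 2$, all the hard analysis is carried out on \emph{low-order} moments, via Minkowski's inequality for the conditional expectation $\E[\,\cdot\,|\sigma(X_k)]$ and translation invariance, and no square function at high $p$ is ever controlled. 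The frequency decomposition is also organised per martingale level $k$, into blocks $\Delta_{k,m}$ of size $\asymp 2^m\ell_k^{-1}$, rather than per frequency $\xi$ as in your fine/coarse split at scale $1/|\xi|$; this is what lets each $\|\mathbf D_k\|_{\ell^q}$ be estimated in isolation.

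The genuine gap in your plan is the one you flag yourself at the end. After invoking BDG you are left with $\E\big[(\sum_k|\Delta_k(\xi)|^2)^p\big]$ for large $p$, but the increments $\Delta_k(\xi)$ are not independent --- that is the whole obstacle --- and your second-moment covariance estimate $\E[Q_j(t)Q_j(s)]\lesssim|t-s|^{-D_\ell-\varepsilon}$ does not obviously bootstrap to high moments with the sharp exponent. Moreover your coarse-regime bound $|\widehat{Q_j}(\xi)|\le\tfrac12\|Q_j-\tau_hQ_j\|_{L^1}$ is applied to the cumulative process $Q_j$, so it does not interlock cleanly with the martingale-difference square function; the paper instead applies the translation--cancellation identity to each increment $D_k$ separately (Lemma~\ref{lemma-translation}), which is what makes the resulting $L^1$-modulus estimates summable in $k$ (Proposition~\ref{prop-p-moment-Dk}). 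Finally, the ``mean-oscillation'' ingredient is not the local energy $\iint_{|t-s|<\ell_k}Q_{k-1}\,dQ_{k-1}$ you describe; it is the deterministic quantity $\int_0^1|X_k(t)-1|\,dt=2\ell_k$ (Proposition~\ref{prop-basic-facts}-\ref{item:basic-facts-L1-Xk-1}), which enters the $T_2$-term of Proposition~\ref{prop-p-moment-Dk}. In short: right ingredients, wrong (and unfinished) assembly --- switching to the $\ell^q$-valued/Pisier formulation with $1<p\le 2$ is precisely what dissolves the high-moment difficulty you run into.
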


\subsection{Proof sketch and new ingredients in Dvoretzky random coverings}\label{s1.3}

To prove Theorem \ref{theorem-main}, it suffices to establish the almost sure lower bound $$\dim_{\mathcal F} K_\ell \geq 1 - D_\ell,$$ since the converse inequality already follows from Kahane's theorem and \eqref{eq-F-H}. Furthermore, without loss of generality, we may henceforth assume $D_\ell<1$, since any Borel set with Hausdorff dimension zero is trivially Salem.

Roughly speaking, we will deduce the above lower bound by examining the Fourier decay of $\mu_{\text{RC}}$,  {\it the multiplicative chaos measure associated with the Dvoretzky random covering}, which is a random measure supported on $K_\ell$ and is defined as the almost sure weak limit of a measure-valued martingale $\{\mu_k\}_{k=1}^\infty$(see \S \ref{sec-mc} below for its precise definition): 
\begin{align}\label{eq-meas-mart}
	\mu_k \xrightarrow[\text{a.s.}]{\text{weakly}} \mu_{\text{RC}}.
\end{align}
Specifically, the essential part of our argument is to prove
\begin{align}\label{eq-goal-1}
	\big|\widehat{\mu_{\text{RC}}}(n)\big|^2=O(n^{-\tau})~\text{ a.s. for any}~\tau<1-D_\ell.
\end{align}
The above Fourier decay of $\mu_\RC$ implies that,   {\it conditioned on the event $\{\mu_\RC \ne 0\}$}, the set $K_\ell$ is almost surely Salem.  For removing the conditioning on the event $\{\mu_\RC\ne 0\}$ in the above statement,  we shall need to introduced a sequence of modified multiplicative chaos measures; see Remark~\ref{rem-vanish} and Lemma~\ref{lem-zero-one-law} in   \S \ref{sec-mc} below for details on this subtle issue.

The proof of \eqref{eq-goal-1} constitutes the core of our argument, and it relies on the vector-valued martingale method, with some important new ingredients incorporated.

\subsubsection{A quick review of the vector-valued martingale method.}
The vector-valued martingale method, initially introduced by Chen--Han--Qiu--Wang \cite{CHQW24} in the setting of  Mandelbrot cascades, and later applied to Gaussian multiplicative chaos in \cite{CLQ25, LQT25I,LQT25II}, has proved to be particularly effective and transparent in establishing the sharp polynomial Fourier decay for various models of multiplicative chaos.    The reader is also refered to \cite{CLS24, GV23} for other recent works on Fourier decays of various models of multiplicative chaos measures.

A simple observation motivating this method is that: to prove the assertion \eqref{eq-goal-1}, it suffices to prove: 
\begin{align}\label{eq-goal-2}
	\forall\,\tau <1-D_\ell, \,\exists\,p,q \in (0,\infty), \, \text{such that} \,\,\E\bigg[\bigg\{\sum_{n=1}^\infty \big| n^{\tau/2} \widehat{\mu_{\text{RC}}}(n)\big|^{q}\bigg\}^{p/q}\bigg]<\infty.
\end{align}

Furthermore, the core idea for establishing \eqref{eq-goal-2} is to reduce it, via the next proposition, to studying the $L^p(\ell^q)$-norm of the vector-valued martingales associated with the measure-valued martingale in \eqref{eq-meas-mart}.

\begin{proposition}\label{prop-vector-valued-martingale}
	For any fixed $\tau<1-D_\ell$ and any $k\ge 1$, define the random sequences 
$$\mathbf{M}_k = \mathbf{M}_k^{(\tau)}:=\Big( n^{\tau /2 } \widehat{\mu_k}(n)\Big)_{n\geq 1} \quad \text{and} \quad  \mathbf{M} = \mathbf{M}^{(\tau)}:=\Big( n^{\tau/2} \widehat{\mu_{\text{RC}}}(n)\Big)_{n\geq 1}, $$
as well as the sequence of differences for $(\mathbf M_k)_{k\geq 1}$:
$$
\mathbf{D}_k = \mathbf{M}_k - \mathbf{M}_{k-1}, \quad \text{for $k\ge 2$ \, and\,\,}~\mathbf{D}_1= \mathbf{M}_1.
$$
Consider the following statements: 
\begin{enumerate}[label=(\roman*),font=\normalfont]
		\item \label{item:VVM-fourier-decay} The assertion \eqref{eq-goal-1} holds true. 
		\item \label{item:VVM-Lqlp-norm} For any $\tau <1-D_\ell$, there exist $1<p\leq 2\le q<\infty$, such that $$  \E\Big[\big\|\mathbf M\big\|^p_{\ell^q}\Big] <\infty.$$
		\item \label{item:VVM-Lqlp-uniform} For any $\tau <1-D_\ell$, there exist $1<p\leq 2\le q<\infty$, such that $$\sup_{k\geq 1}\E\Big[\big\|\mathbf M_k \big\|^p_{\ell^q}\Big] < \infty.$$
		\item \label{item:VVM-decoupling} For any $\tau <1-D_\ell$, there exist $1<p\leq 2\le q<\infty$, such that 
		\begin{align}\label{eq-goal-3}
			\sum_{k=1}^\infty  \E\Big[\big\|\mathbf D_k \big\|^p_{\ell^q}\Big]  <\infty.
		\end{align}
\end{enumerate}

	Then we have the implications $\textnormal{\ref{item:VVM-decoupling}}\Rightarrow \textnormal{\ref{item:VVM-Lqlp-uniform}}\Rightarrow \textnormal{\ref{item:VVM-Lqlp-norm}} \Rightarrow \textnormal{\ref{item:VVM-fourier-decay}}$.
\end{proposition}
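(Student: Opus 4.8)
The plan is to prove the chain $\textnormal{\ref{item:VVM-decoupling}}\Rightarrow\textnormal{\ref{item:VVM-Lqlp-uniform}}\Rightarrow\textnormal{\ref{item:VVM-Lqlp-norm}}\Rightarrow\textnormal{\ref{item:VVM-fourier-decay}}$, carrying at each arrow the same exponents $(p,q)$ supplied by the hypothesis. Fix $\tau<1-D_\ell$ and let $\mathcal F_k=\sigma(\omega_1,\dots,\omega_k)$. The observation underlying everything is that, since $\{\mu_k\}_{k\ge1}$ is a measure-valued martingale for $(\mathcal F_k)$, pairing it against the character $x\mapsto e^{-2\pi i nx}$ gives $\E[\widehat{\mu_{k+1}}(n)\mid\mathcal F_k]=\widehat{\mu_k}(n)$ for every $n\ge1$; multiplying by the deterministic weight $n^{\tau/2}$, the sequence $(\mathbf M_k)_{k\ge1}$ is a coordinatewise $(\mathcal F_k)$-martingale with differences $(\mathbf D_k)_{k\ge1}$, and $\mathbf M_k=\sum_{j=1}^k\mathbf D_j$.

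For the implication $\textnormal{\ref{item:VVM-decoupling}}\Rightarrow\textnormal{\ref{item:VVM-Lqlp-uniform}}$, note first that \eqref{eq-goal-3} says each $\mathbf D_j\in L^p(\Omega;\ell^q)$; since $p>1$, the finite sums $\mathbf M_k$ lie in $L^p(\Omega;\ell^q)$ too, and because the coordinate functionals separate points of $\ell^q$, the coordinatewise martingale identity upgrades to the Bochner one, so $(\mathbf M_k)$ is a genuine $\ell^q$-valued martingale, bounded in $L^p$ at each step. Now I would invoke the classical fact that $\ell^q$ with $2\le q<\infty$ is $2$-uniformly smooth, equivalently of martingale type $2$, and hence of martingale type $p$ for every $1<p\le2$: there is $C_{p,q}<\infty$ such that every $\ell^q$-valued martingale $\big(\sum_{j\le k}d_j\big)_k$ satisfies $\sup_k\E\big[\big\|\sum_{j\le k}d_j\big\|_{\ell^q}^p\big]\le C_{p,q}\sum_k\E\big[\|d_k\|_{\ell^q}^p\big]$. (This is standard; for a self-contained proof one derives the $L^2$ square-function bound $\sup_k\E\big[\big\|\sum_{j\le k}d_j\big\|_{\ell^q}^2\big]\lesssim\sum_k\E\big[\|d_k\|_{\ell^q}^2\big]$ from the $2$-smoothness inequality, upgrades it to $\E\big[\sup_k\big\|\sum_{j\le k}d_j\big\|_{\ell^q}^p\big]\lesssim_p\E\big[\big(\sum_k\|d_k\|_{\ell^q}^2\big)^{p/2}\big]$ for all $0<p<\infty$ by a good-$\lambda$ argument, and then uses $p\le2$ and subadditivity of $t\mapsto t^{p/2}$ to bound $\E\big[\big(\sum_k\|d_k\|_{\ell^q}^2\big)^{p/2}\big]\le\sum_k\E\big[\|d_k\|_{\ell^q}^p\big]$.) Applying this with $d_k=\mathbf D_k$ converts \eqref{eq-goal-3} into $\sup_k\E\big[\|\mathbf M_k\|_{\ell^q}^p\big]<\infty$, i.e. $\textnormal{\ref{item:VVM-Lqlp-uniform}}$.

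The remaining two implications are soft. For $\textnormal{\ref{item:VVM-Lqlp-uniform}}\Rightarrow\textnormal{\ref{item:VVM-Lqlp-norm}}$: the almost sure weak convergence $\mu_k\to\mu_{\text{RC}}$ from \eqref{eq-meas-mart}, tested against the continuous functions $e^{-2\pi i nx}$, gives $\widehat{\mu_k}(n)\to\widehat{\mu_{\text{RC}}}(n)$ a.s. for each $n$, i.e. $\mathbf M_k\to\mathbf M$ coordinatewise a.s.; truncating to the first $N$ coordinates, letting $k\to\infty$ and then $N\to\infty$, we get the lower-semicontinuity bound $\|\mathbf M\|_{\ell^q}\le\liminf_k\|\mathbf M_k\|_{\ell^q}$ a.s. Raising to the $p$-th power (which commutes with $\liminf$) and applying Fatou's lemma, $\E\big[\|\mathbf M\|_{\ell^q}^p\big]\le\liminf_k\E\big[\|\mathbf M_k\|_{\ell^q}^p\big]\le\sup_k\E\big[\|\mathbf M_k\|_{\ell^q}^p\big]<\infty$. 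For $\textnormal{\ref{item:VVM-Lqlp-norm}}\Rightarrow\textnormal{\ref{item:VVM-fourier-decay}}$: $\E\big[\|\mathbf M\|_{\ell^q}^p\big]<\infty$ forces $\|\mathbf M\|_{\ell^q}<\infty$ a.s., hence $\sum_{n\ge1}|n^{\tau/2}\widehat{\mu_{\text{RC}}}(n)|^q<\infty$ a.s., so this (being summable) is a bounded sequence and $|\widehat{\mu_{\text{RC}}}(n)|^2=O(n^{-\tau})$ a.s.; running this for a sequence $\tau_m\uparrow1-D_\ell$ and intersecting the corresponding probability-one events yields a single event on which \eqref{eq-goal-1} holds.

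The only step that is not pure bookkeeping is the vector-valued martingale inequality in the second paragraph — the $2$-uniform smoothness of $\ell^q$ for $q\ge2$ — and this is exactly what dictates the admissible range of exponents: $q\ge2$ for the smoothness, $p\le2$ to dominate the $\ell^2$-valued square function of the differences by $\sum_k\|\mathbf D_k\|_{\ell^q}^p$, and $p>1$ to keep $L^p(\Omega;\ell^q)$-integrability of finite sums. The one technical point demanding a little care is to verify that $(\mathbf M_k)$ is a bona fide $\ell^q$-valued (Bochner-integrable, strongly measurable — here the separability of $\ell^q$ for $q<\infty$ enters) martingale rather than merely a coordinatewise one, which is precisely where hypothesis $\textnormal{\ref{item:VVM-decoupling}}$ is used.
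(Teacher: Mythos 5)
Your proof is correct and follows essentially the paper's route: the load-bearing step, $\textnormal{(iv)}\Rightarrow\textnormal{(iii)}$, is handled via the same Pisier martingale type-$p$ inequality for $\ell^q$ ($2\le q<\infty$), and the remaining two implications are soft. The one place you diverge is $\textnormal{(iii)}\Rightarrow\textnormal{(ii)}$: the paper invokes the vector-valued martingale convergence theorem (for $p>1$) to obtain $L^p(\ell^q)$-convergence of $\mathbf M_k$ to a limit, then identifies that limit with $\mathbf M$ via the a.s.\ weak convergence $\mu_k\to\mu_{\text{RC}}$; you instead establish the coordinatewise a.s.\ convergence $\mathbf M_k\to\mathbf M$, pass it through the lower semicontinuity of the $\ell^q$-norm, and close with Fatou. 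Your route is slightly more elementary (no martingale convergence theorem needed for this arrow, only for interpreting the limit) and yields exactly the integrability statement (ii); the paper's route is marginally stronger in that it also produces $L^p(\ell^q)$-convergence, though this extra information is not used. The optional sketch you give of the type-$p$ inequality (square-function estimate plus good-$\lambda$ upgrade, then subadditivity of $t\mapsto t^{p/2}$) is a reasonable outline but is not needed since the fact is standard; the paper simply cites Pisier's book.
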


The main tools used in the proof of Proposition \ref{prop-vector-valued-martingale} are the martingale convergence theorem (vector-valued version) and Pisier's martingale type-$p$ inequality. A detailed discussion of a general version of this proposition and its applications to Fourier decay of multiplicative chaos measures can be found in \cite{LQT25II}.  Note also that, for fixed $k$, the measure $\mu_k$ has a density taking constants on a finite number of random intervals and thus $|\widehat{\mu_k}(n)| = O(n^{-1})$, this  already gives the hint that, almost surely $\mathbf{M}_k \in \ell^q$ for large $q$ depending only on $\tau$ (but does not depend on $k$); see  Lemma~\ref{lem-pq} for the  precise choices of $p, q$. 

For the sake of completeness, we present a quick proof here.

\begin{proof}[Proof of Proposition \ref{prop-vector-valued-martingale}]
	The implication $\textnormal{\ref{item:VVM-Lqlp-norm}}\Rightarrow \textnormal{\ref{item:VVM-fourier-decay}}$ is trivial. We first prove $\textnormal{\ref{item:VVM-Lqlp-uniform}}\Rightarrow \textnormal{\ref{item:VVM-Lqlp-norm}}$. It is routine to verify that $(\mathbf M_k)_{k\geq 1}$ is an $\ell^q$-valued martingale for large enough $q$ which depends only on $\tau$. Since $p>1$, under the assumption of \ref{item:VVM-Lqlp-uniform}, the martingale convergence theorem (for $\ell^q$-valued martingales; see \cite[Chapter 2]{Pis16}) implies that $(\mathbf M_k)$ converges to some limit $\mathbf M'$, both in the $L^p(\ell^q)$-norm and almost surely. By the weak convergence \eqref{eq-meas-mart}, for each $n\geq 1$, we have $\widehat{\mu_k}(n) \to \widehat{\mu_{\text{RC}}}(n)$, and hence $$\mathbf M'(n)= \lim_{k\to\infty} \mathbf M_k(n)= \lim_{k\to \infty} n^{\tau/2} \widehat{\mu}_k(n) =n^{\tau/2} \widehat{\mu_{\text{RC}}}(n).$$ This shows $\mathbf M'=\mathbf M$, and the statement \ref{item:VVM-Lqlp-norm} follows.
	
	Next we prove $\textnormal{\ref{item:VVM-decoupling}}\Rightarrow \textnormal{\ref{item:VVM-Lqlp-uniform}}$. Since for any $2\le q<\infty$,  the Banach space $\ell^q$ is of martingale type $p$ for any $1<p\le 2$ (see \cite[Chapter 10]{Pis16}), by Pisier's type-$p$ inequality we have $$ \E\Big[\big\|\mathbf M_k \big\|^p_{\ell^q}\Big] \leq C_p\cdot  \sum_{j=1}^k \E\Big[\big\|\mathbf D_j \big\|^p_{\ell^q}\Big]$$ for a constant $C_p$ depending only on $p$. The proof is completed by taking the supremum over $k\ge 1$.
\end{proof}

\subsubsection{New ingredients}\label{sec-new-ingre}
By the discussions in the previous subsection, and in particular 
by Proposition~\ref{prop-vector-valued-martingale}, to prove Theorem~\ref{theorem-main}, one only needs to prove \eqref{eq-goal-3}. Therefore, we need an efficient upper estimate for
\begin{align}\label{goal-Dj}
\E\Big[\big\|\mathbf D_k \big\|^p_{\ell^q}\Big] = \E \Bigg[ \Bigg\{\sum_{n=1}^\infty n^{\frac{\tau q}{2}} \bigg|\int_0^1 D_k(t) e^{-2\pi i n t} dt\bigg|^q \Bigg\}^{p/q}\Bigg], 
\end{align}
where the precise definition of $D_k(t)$ is given in \eqref{def-Dk} below. Indeed, the estimate of \eqref{goal-Dj} still requires a more delicate analysis.

{\flushleft \bf A: Weak spatial independence in previous works:}
In previous studies on Fourier decays for Gaussian multiplicative chaos \cite{LQT25I}, for canonical Mandelbrot cascades \cite{CHQW24}, and for canonical Mandelbrot random coverings on the real line \cite{LQT25II}, the difficulty in estimating quantities similar to \eqref{goal-Dj} was overcome by leveraging a certain weak spatial independence of the generating process $\{X_k(t): t \in \mathcal T\}$ (see \cite[Assumption 1]{LQT25II} for a precise formulation, and see also \cite{SS18} for the original definition of spatially independent martingales). This weak spatial independence  allows for the reapplication of Pisier's martingale type-$p$ inequality, which further reduces \eqref{goal-Dj} to several localization estimates.

{\flushleft \bf B: Goes beyond weak spatial independence in current paper via translation-cancellation trick, small $L^1$-modulus of continuity and  small mean-oscillation of the sample  paths:}
In the current context of the Dvoretzky random covering model, the associated process $X_k$ (defined as in \eqref{eq-X_k} below) does not possess any weak spatial independence. To obtain the efficient upper estimate for \eqref{goal-Dj}, a pivotal step in our approach involves applying a variant of the {\it translation-cancellation trick} (see Lemma \ref{lemma-translation} below) to obtain a pointwise upper estimate of the supremum of a certain collection of high-frequencies of $D_k$. More precisely, we shall use the following simple $L^1$-modulus of continuity estimate: for small  $h>0$ and the subset of positive integers  $\Delta(h): = \big\{n \in\mathbb N_+: |e^{2\pi i n h}-1|\geq 1\big\}$, one has $$\sup_{n\in \Delta(h)} \bigg| \int_0^1 D_k(t) e^{-2\pi i n t}dt \bigg| \leq  \underbrace{\int_0^1 \big| D_k(t+h) - D_k(t)\big|dt}_{\text{denoted by $\omega^{L^1}_{D_k} (h)$}}.$$
Note that the quantity $\omega^{L^1}_{D_k} (h)$ in the above inequality is precisely the $L^1$-modulus of continuity of the sample path of $D_k$, which can be further controlled by the $L^1$-modulus of continuity of the sample paths of $X_j$ for $1\le j \le k$. 

Indeed, our subsequent proof requires an efficient upper estimate for the $p$-moment of $\omega^{L^1}_{D_k} (h)$: 
\[
\E \Big[\Big\{\omega^{L^1}_{D_k} (h)\Big\}^p\Big]= \E\bigg[ \Big(\int_0^1 \big| D_k(t+h) - D_k(t)\big|dt\Big)^p\bigg]. 
\]
The success of our approach also fundamentally relies on  small $L^1$-modulus of continuity and  small mean-oscillation of the sample paths of the generating process $X_k(t)$. More precisely, $D_k$ has the following form 
\[
X_k(t)  := \frac{1 - \indi_{I_k}(t)}{1- \ell_k}, \, M_k(t)  :=   \prod_{j=1}^{k} X_j(t) 
\] 
and 
\[
\text{$D_k(t): =M_{k-1}(t) \cdot (X_k(t)-1)$ for $k\ge 2$ and $D_1(t)= M_1(t)$. }
\]
Then by the triangle inequality, 
\[
| D_k(t+h) - D_k(t)| \le  | M_{k-1}(t+h)\cdot ( X_k(t+h)-X_k(t))|+ |(M_{k-1}(t+h)-M_{k-1}(t)) \cdot ( X_k(t)-1)|. 
\]  
Combining this with an application of Minkowski's inequality for the conditional expectation $\E^{(k)}[\,\cdot\,]: = \E[\,\cdot\, | \sigma(X_k)]$, we can deduce that (see the proof of Proposition~\ref{prop-p-moment-Dk} below for  details): 
\begin{align*}
\E^{(k)} \Big[\Big\{\omega^{L^1}_{D_k} (h)\Big\}^p\Big] \lesssim_p  &  \Bigg\{\int_0^1 \Big( \E\Big[ \big|M_{k-1}(t+h)\big|^p  \Big]\Big)^{1/p}\big|X_k(t+h)-X_k(t)\big|  dt \Bigg\}^{p}  
\\
& + \Bigg\{\int_0^1 \Big(\E\Big[ \big| M_{k-1}(t+h)-M_{k-1}(t) \big|^p\Big] \Big)^{1/p}\big|X_k(t)-1\big| dt \Bigg\}^{p}. 
\end{align*}
The next simple observation is that, by translation-invariance of the distributions of all our processes $X_k$, both of the following quantities are constants (independent of $t, h$) and they can be easily computed or upper-estimated (see Proposition \ref{prop-basic-facts} below): 
\[
A_k(p) : = \E\Big[ \big|M_{k-1}(t+h)\big|^p  \Big],   \quad  B_k(p) = \E\Big[ \big| M_{k-1}(t+h)-M_{k-1}(t) \big|^p\Big]. 
\]
Thus one obtains 
\begin{align*}
\E^{(k)} \Big[\Big\{\omega^{L^1}_{D_k} (h)\Big\}^p\Big] \lesssim_p  &  A_k(p) \cdot   \Bigg( \underbrace{\int_0^1 \big|X_k(t+h)-X_k(t)\big|  dt}_{\text{controlled by $L^1$-modulus of continuity of $X_k$}} \Bigg)^{p}   + B_k(p) \cdot \Bigg( \underbrace{\int_0^1\big|X_k(t)-1\big| dt}_{\text{small mean-oscillation}} \Bigg)^{p}. 
\end{align*}
From the above inequality, one immediately sees the roles played by the $L^1$-modulus of continuity of the sample path $X_k$ as well as  the  small mean-oscillation of the sample path $X_k$ around its mean value $\E[X_k(t)] = 1$.     See \S \ref{sec-proof} for the precise arguments.

\subsection{Further discussions and remarks}

Note that any Borel set with Hausdorff dimension zero is trivially Salem.  Particularly, in the critical case $D_\ell=1$, the uncovered set $K_\ell$ cannot support any non-zero positive measure with polynomial Fourier decay. This necessitates a further study of Fourier decay in the critical case, and the following problems arise naturally:

\begin{problem}[About the set $K_\ell$] 
	Assuming \eqref{eq-non-Shepp} and $D_\ell=1$, determine the additional conditions on the sequence $\ell$ under which $K_\ell$ almost surely supports a non-zero Rajchman measure (that is, a measure whose Fourier coefficients vanish at infinity). 
\end{problem}
A related question to ask is whether the multiplicative chaos measure $\mu_{\text{RC}}$ is almots surely Rajchman:
\begin{problem}[About the measure $\mu_{\text{RC}}$]
Assuming \eqref{eq-non-Shepp} and $D_\ell=1$, determine the necessary and sufficient conditions on the sequence $\ell$ under which the measure $\mu_{\text{RC}}$ is almost surely Rajchman.
\end{problem}

It should be noted that a partial answer to Problem~2 was provided by M. Tan in his master's thesis (see also his recent preprint \cite{Tan25}). Specifically, Tan proved that the condition 
\begin{equation}\label{eq-Tan-condition-Rajchman}
\sum_{k=1}^\infty (\ell_k-\ell_{k+1}) \exp\big( \ell_1+\cdots +\ell_k\big)<\infty
\end{equation}
is sufficient for $\mu_{\text{RC}}$ to be almost surely Rajchman.
In the following remark, we observe that the criterion \eqref{eq-Tan-condition-Rajchman} also provides a sufficient condition for $\mu_{\text{RC}}$ to be Rajchman in the $D_\ell=1$ case. 
\begin{remark}
To confirm that criterion \eqref{eq-Tan-condition-Rajchman} provides a non-vacuous sufficient condition for non-zero $\mu_{\text{RC}}$ to be Rajchman in the $D_\ell=1$ case, consider the sequence
\[
\ell_k=\frac{1}{k+10}\cdot \bigg(1-\frac{1}{\sqrt{\log(k+10)}}\bigg),\quad \forall k\ge 1.
\]
This sequence satisfies the necessary prerequisites: it meets \eqref{eq-non-Shepp} (producing a non-zero multiplicative measure $\mu_{\text{RC}}$) and satisfies $D_\ell=1$. Crucially, it also fulfills  \eqref{eq-Tan-condition-Rajchman}, thus confirming that the criterion is non-vacuous in this regime.
\end{remark}

\begin{remark}\label{remark:Tan-Dl<1-verification}
We also observe that for non-increasing sequences $\ell_k$, condition \eqref{eq-Tan-condition-Rajchman} is automatically satisfied when $D_\ell<1$ (see Appendix \ref{appendix:Tan-Dl<1-verification} for details).
\end{remark}

\vspace{0.3cm}

\noindent{\it Notation.} We denote by $\mathbb N_+ = \{1, 2, \dots\}$ the set of positive integers. By $A \lesssim B$ we mean there exists a constant $C>0$, depending only on the fixed parameters but independent of the main variables under consideration, such that $A \le C B$. Moreover, if $A\lesssim B$ and $B\lesssim A$, we write $A\asymp B$. By convention, the empty product $\prod_{j\in \emptyset} c_j$ for any numbers $c_j$ is defined to be $1$ and the empty sum $\sum_{j\in \emptyset} c_j$ is defined to be $0$.

\subsection*{Acknowledgements.} 
YQ is supported by National Natural Science Foundation of China (NSFC No. 12471145).

\section{The multiplicative chaos measures for Dvoretzky random coverings}\label{sec-mc}
The multiplicative chaos measure for Dvoretzky random covering model, which we denote by $\mu_{\text{RC}}$, was introduced by Kahane in his 1968 book (the first edition of \cite{Kah85}). Kahane pointed out that a good method to study the Dvoretzky random covering model is to define the following random products: 
\begin{align}\label{eq-X_k}
	M_k(t) := \prod_{j=1}^k X_j(t), \quad \text{with} \,\, X_j(t) := \frac{1 - \indi_{I_j}(t)}{1 - \ell_j}, \quad t \in \mathbb{T}, \, k \in \mathbb{N}_+,
	\end{align}
	where $I_j = (\omega_j, \omega_j + \ell_j)$ is the $j$-th random arc of length $\ell_j$, and then consider the random measures
	\begin{align*}
	d\mu_k(t):= M_{k}(t)dt, \quad k\in\mathbb N_+.
	\end{align*}
	Note that $(X_k )_k$ is a sequence of independent stochastic processes satisfying $X_k(t)\geq 0$, $\E[X_k(t)]\equiv 1$. Consequently, $(\mu_k)_{k\geq 1}$ forms a positive measure-valued martingale with respect to the natural filtration $(\mathscr{F}_k)_{k\ge 1}$ given by 
	\[
	\mathscr F_k =  \sigma(X_1,  X_2, \ldots, X_k) =  \sigma(\omega_1, \omega_2, \ldots, \omega_k)  .
	\] Kahane \cite{Kah87, Kah00} established the almost sure weak convergence of $(\mu_k)_{k\geq 1}$, and the weak limit is defined as the multiplicative chaos measure $\mu_{\text{RC}}$: 
\begin{align*}
	\mu_k \xrightarrow[\text{a.s.}]{\text{weakly}} \mu_{\text{RC}}.
\end{align*}
Note that  the  support of $\mu_\RC$ satisfies 
\[
\supp(\mu_{\text{RC}})  \subset \T \setminus \bigcup_{k = 1}^\infty I_k \subset K_\ell. 
\]
The random measure $\mu_{\text{RC}}$ is called {\it non-degenerate} if 
\[
\PP(\mu_{\text{RC}} \neq 0) >0. 
\]
Kahane~\cite[\S 3.1: Random coverings]{Kah87} proved that under the assumption \eqref{eq-non-Shepp}, the sequence $(\mu_k(\T))_{k\ge 1}$  of total masses is an $L^2$-bounded martingale and hence $\E[\mu_{\text{RC}}(\T)] = 1$.  Consequently, the assumption \eqref{eq-non-Shepp} implies that $\mu_{\text{RC}}$ is non-degenerate.  
\begin{remark}\label{rem-vanish}
It is easy to see that $\PP(\mu_\RC=0)>0$ (since $\sum_{k}\ell_k>1$). Therefore,
even under the assumption \eqref{eq-non-Shepp}, the random measure $\mu_\RC$ is non-degenerate but one always has
$
\PP(\mu_\RC =0)>0. 
$ 
\end{remark}

For our purpose of determining the Fourier dimension of the uncovered set $K_\ell$,  we shall need to examine some non-zero measure supported on $K_\ell$.  Therefore, in view of Remark \ref{rem-vanish}, we shall also need the seqeunce of multiplicative chaos measures $(\mu_{\text{RC}}^{\geqslant k})_{k=1}^\infty$ defined  by the following limit: 
\begin{align}\label{mu-RC-k}
\Big[\prod_{j=k}^m X_j(t) \Big]dt \xrightarrow[\text{a.s.}]{\text{weakly}} \mu_{\text{RC}}^{\geqslant k}. 
\end{align}
By definition, one has $\mu_{\text{RC}}^{\geqslant 1}  = \mu_{\text{RC}}$.  We emphasize that the sequence of  multiplicative chaos measures $(\mu_{\text{RC}}^{\geqslant k})_{k=1}^\infty$ are all coupled and satisfy the following relations: for $k, n \in \mathbb{N}_+$, 
\begin{align}\label{mu-k-rel}
\mu_{\text{RC}}^{\geqslant k} (dt)  = \Big[\prod_{j=k}^{k+n-1}X_j(t) \Big] \cdot \mu_{\text{RC}}^{\geqslant k+n}(dt),  \quad \text{a.s.} 
\end{align}
The above equation imply that $\big(\supp(\mu_\RC^{\geqslant k})\big)_{k=1}^\infty$ are nested subsets of $K_\ell$: 
\[
\supp(\mu_\RC^{\geqslant 1}) \subset \cdots \subset \supp(\mu_{\RC}^{\geqslant k}) \subset \supp(\mu_\RC^{\geqslant k+1}) \subset \cdots \subset K_\ell. 
\]

The following lemma will be useful for us. Its proof seems to be known to the experts and we include its simple proof for completeness. 
\begin{lemma}\label{lem-zero-one-law}
Assuming \eqref{eq-non-Shepp},  one has 
\[
\PP\Big( \big\{ \mu_\RC^{\geqslant k} = 0 \,\, \text{for all $k\ge 1$} \big\}\Big) = 0. 
\]
That is, under the assumption \eqref{eq-non-Shepp}, almost surely, there exists $k\ge 1$ such that the measure $\mu_\RC^{\geqslant k} \ne 0$. 
\end{lemma}
 \begin{proof}
Lemma \ref{lem-zero-one-law} is proved by using Kolmogorov's zero-one law. Indeed, the relation \eqref{mu-k-rel} implies that 
\[
\big\{ \mu_\RC^{\geqslant k+1} = 0 \big\}   \subset \big\{ \mu_\RC^{\geqslant k} = 0 \big\}, \quad k\in \mathbb{N}_+.
\]
Consequently,  for any $N\ge 1$, one has 
\[
\big\{ \mu_\RC^{\geqslant k} = 0 \,\, \text{for all $k\ge 1$} \big\} = \bigcap_{k\ge 1} \big\{ \mu_\RC^{\geqslant k} = 0 \big\} =   \bigcap_{k\ge N} \big\{ \mu_\RC^{\geqslant k} = 0 \big\}.
\]
It follows that 
\[
\big\{ \mu_\RC^{\geqslant k} = 0 \,\, \text{for all $k\ge 1$} \big\}   \in \bigcap_{N\ge 1} \sigma\big(X_N, X_{N+1},\cdots \big).
\]
Since $X_j$ are all independent, by Kolmogorov's zero-one law, one obtains 
\begin{align}\label{eq-01}
\PP\Big( \big\{ \mu_\RC^{\geqslant k} = 0 \,\, \text{for all $k\ge 1$} \big\}\Big) \in \{0,1\}. 
\end{align}
Now, since the assumption \eqref{eq-non-Shepp} implies that $\mu_\RC$ is non-degenerate (that is, $\PP(\mu_\RC \ne 0) >0$), one has 
\begin{align}\label{eq-less1}
\PP\Big( \big\{ \mu_\RC^{\geqslant k} = 0 \,\, \text{for all $k\ge 1$} \big\}\Big) \le \PP(\mu_\RC = 0) = 1 - \PP(\mu_\RC \ne 0)<1.
\end{align}
Combining  \eqref{eq-01} and \eqref{eq-less1}, we complete the whole proof of the lemma. 
 \end{proof}

   \section{Proof of Theorem~\ref{theorem-main} conditioned on $\{\mu_\RC\ne 0\}$}\label{sec-proof}

   This section is devoted to the proof of the following 
   \begin{proposition}\label{theorem-submain}
   Assuming \eqref{eq-non-Shepp}, then 
   \[
  \big|\widehat{\mu_{\RC}}(n)\big|^2=O(n^{-\tau})~\text{ a.s. for any}~\tau<1-D_\ell.
   \]
   In particular, conditioned on the event $\{\mu_\RC \ne 0\}$, the uncovered set $K_\ell$ is almost surely Salem with Fourier dimension \[\dim_{\mathcal F} K_\ell = \dim_{\mathcal H} K_\ell = 1-D_\ell.\]
   \end{proposition}

       As we discussed in \S \ref{s1.3} (see in particular Proposition \ref{prop-vector-valued-martingale}), to prove Proposition \ref{theorem-submain}, we only need to prove \eqref{eq-goal-3}. Namely, for any $\tau<1-D_\ell$, there exist $1<p\leq 2\leq q<\infty$ such that 
    \begin{align}\label{eq-goal-4}
    	\sum_{k=1}^\infty \E \Bigg[ \Bigg\{\sum_{n=1}^\infty n^{\frac{\tau q}{2}} \bigg|\int_0^1 D_k(t) e^{-2\pi i n t} dt\bigg|^q \Bigg\}^{p/q}\Bigg]<\infty,
    \end{align}
    where  $D_k$ are the martingale differences defined as 
    \begin{align}\label{def-Dk}
     D_k(t):= M_k(t)-M_{k-1}(t),\text{ for }k\geq 2, \text{ and }D_1(t):= M_1(t). 
     \end{align}
     
    \subsection{Pointwise upper estimate}
    We begin by providing {\it pointwise upper estimate} of the summands in \eqref{eq-goal-4}:
    \[
    \Bigg\{\sum_{n=1}^\infty n^{\frac{\tau q}{2}} \bigg|\int_0^1 D_k(t) e^{-2\pi i n t} dt\bigg|^q \Bigg\}^{p/q}, \quad k \ge 1. 
    \]

    \subsubsection{Dyadic frequency decomposition} 
		 For each integer $k\ge 1$, consider the dyadic blocks 
		 \begin{align}\label{def-Dkm}
		 \Delta_{k,m}:=\big\{n \in \mathbb N_+: 2^{m-1} \ell_k^{-1} \leq n < 2^{m} \ell_k^{-1} \big\},\quad m\geq 1,
		 \end{align} and $$\Delta_{k,0}:= \big\{n\in \mathbb N_+: 1\leq n< \ell_k^{-1}\big\}.$$ Since $0<p/q<1$, by the sub-additive inequality $(x+y)^{p/q}\le x^{p/q} + y^{p/q}$,  we have
    \begin{align*}
    	\Bigg\{\sum_{n=1}^\infty n^{\frac{\tau q}{2}} \bigg|\int_0^1 D_k(t) e^{-2\pi i n t} dt\bigg|^q \Bigg\}^{p/q}
   	= & \Bigg\{\sum_{m=0}^\infty \sum_{n\in \Delta_{k,m}} n^{\frac{\tau q}{2}} \bigg|\int_0^1 D_k(t) e^{-2\pi i n t} dt\bigg|^q \Bigg\}^{p/q}\\
    	\leq &\sum_{m=0}^\infty \Bigg\{\sum_{n\in \Delta_{k,m}} n^{\frac{\tau q}{2}} \bigg|\int_0^1 D_k(t) e^{-2\pi i n t} dt\bigg|^q \Bigg\}^{p/q}\\\
    	\leq & \sum_{m=0}^\infty \bigg(\sum_{n\in \Delta_{k,m}} n^{\frac{\tau q}{2}}\bigg)^{p/q} \cdot \sup_{n\in \Delta_{k,m}}\bigg|\int_0^1 D_k(t) e^{-2\pi i n t} dt\bigg|^p.
    \end{align*}

		Since $\Big(\sum_{n\in \Delta_{k,m}} n^{\frac{\tau q}{2}}\Big)^{p/q}\asymp \Big(2^m \ell_k^{-1}\Big)^{\frac{\tau p}{2} + \frac{p}{q}}$, we deduce that
		\begin{align}\label{eq-dyadic-frequency}
			\Bigg\{\sum_{n=1}^\infty n^{\frac{\tau q}{2}} \bigg|\int_0^1 D_k(t) e^{-2\pi i n t} dt\bigg|^q \Bigg\}^{p/q}\lesssim \sum_{m=0}^\infty \Big(2^m \ell_k^{-1}\Big)^{\frac{\tau p}{2} + \frac{p}{q}} \cdot \sup_{n\in \Delta_{k,m}}\bigg|\int_0^1 D_k(t) e^{-2\pi i n t} dt\bigg|^p.
		\end{align}

    For the low-frequency terms $n\in \Delta_{k,0}$, we could apply the crude bound
    \begin{align}\label{eq-crude-bound}
    \bigg|\int_0^1 D_k(t) e^{-2\pi i n t} dt\bigg|\le \int_0^1 |D_k(t)| dt,\quad \forall n\in \Delta_{k,0}.
		\end{align}
    This is sufficient for our further analysis. However, this bound fails to capture the oscillatory nature of the Fourier basis $e^{-2\pi i nt}$ for $n>\ell_k^{-1}$. To handle the high-frequency terms, we shall apply a variant of the translation-cancellation trick presented below.
    
    \subsubsection{A variant of the translation-cancellation trick for high-frequency terms}
    
    \begin{lemma}\label{lemma-translation}
    	Suppose $h>0$ and let $\Delta(h): = \big\{n \in\mathbb N_+: |e^{2\pi i n h}-1|\geq 1\big\}.$ Then $$\sup_{n\in \Delta(h)} \bigg| \int_0^1 D_k(t) e^{-2\pi i n t}dt \bigg| \leq \int_0^1 \big| D_k(t+h) - D_k(t)\big|dt.$$
    \end{lemma}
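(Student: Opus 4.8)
The plan is to exploit the defining inequality of the frequency set $\Delta(h)$, namely that $|e^{2\pi i n h} - 1| \ge 1$ for $n \in \Delta(h)$, to divide by this quantity after a single translation in the integral. First I would fix $n \in \Delta(h)$ and perform the change of variables $t \mapsto t - h$ in the Fourier integral; since $D_k$ is $1$-periodic (being a finite product of the $1$-periodic functions $X_j$, minus another such product), the domain of integration $[0,1]$ is unchanged and one obtains
\[
\int_0^1 D_k(t) e^{-2\pi i n t}\,dt = e^{-2\pi i n h}\int_0^1 D_k(t+h) e^{-2\pi i n t}\,dt.
\]
Subtracting this identity from the original integral and dividing by $1 - e^{-2\pi i n h}$ (whose modulus equals $|e^{2\pi i n h}-1| \ge 1$, hence is nonzero) yields
\[
\int_0^1 D_k(t) e^{-2\pi i n t}\,dt = \frac{1}{1 - e^{-2\pi i n h}}\int_0^1 \big(D_k(t) - D_k(t+h)\big) e^{-2\pi i n t}\,dt.
\]

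From here I would simply take absolute values: the prefactor has modulus $\le 1$ by the definition of $\Delta(h)$, and the integral is bounded by $\int_0^1 |D_k(t) - D_k(t+h)|\,dt$ via the triangle inequality, since $|e^{-2\pi i n t}| = 1$. This gives the pointwise bound for each fixed $n \in \Delta(h)$, and taking the supremum over $n \in \Delta(h)$ completes the argument. A minor technical point worth spelling out is the periodicity of $D_k$: for $k \ge 2$ we have $D_k(t) = M_k(t) - M_{k-1}(t)$ with $M_k(t) = \prod_{j=1}^k X_j(t)$ and $X_j(t) = (1 - \indi_{I_j}(t))/(1-\ell_j)$, and each indicator $\indi_{I_j}$ is understood on $\T = \R/\Z$, so $D_k$ extends to a $1$-periodic function on $\R$; the case $k=1$ is identical with $D_1 = M_1$.

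There is essentially no substantive obstacle here — the statement is a clean instance of the classical ``divide by $e^{2\pi i n h} - 1$'' smoothing trick, and the only thing to be careful about is the bookkeeping: ensuring the change of variables is justified by periodicity rather than by any decay, and ensuring one divides by $1 - e^{-2\pi i n h}$ (or equivalently $e^{-2\pi i n h} - 1$) rather than $e^{2\pi i n h} - 1$, noting these have the same modulus. The inequality $|1 - e^{-2\pi i n h}|^{-1} \le 1$ on $\Delta(h)$ is exactly what the definition of $\Delta(h)$ was designed to provide, so no further estimation of the exponential is needed.
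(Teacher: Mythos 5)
Your proof is correct and is essentially the same translation-cancellation argument as the paper's: you translate by $h$, subtract, and divide by a factor of modulus $|e^{2\pi i n h}-1| \ge 1$. The only cosmetic difference is that you divide by $1 - e^{-2\pi i n h}$ while the paper divides by $e^{2\pi i n h} - 1$, which, as you observe, have the same modulus, so the two are interchangeable.
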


    \begin{proof}
    	By the translation invariance of Haar measure, we have 
    	\[
    	\int_0^1 D_k(t+h)e^{-2\pi i nt} dt=e^{2\pi inh}\int_0^1 D_k(t+h) e^{-2\pi i n(t+h)} dt=e^{2\pi i nh}\int_0^1 D_k(t)e^{-2\pi i nt} dt.
    	\]
    	Therefore
    	\begin{align*}
    		\int_0^1 \big[D_k(t+h)-D_k(t)\big] e^{-2\pi i n t} dt= \big(e^{2\pi i n h }-1\big) \int_{0}^1 D_k(t) e^{-2\pi i n t} dt,\quad \forall h>0.
    	\end{align*}
    	Since $|e^{2\pi i nh}-1|\ge 1$ for any $n\in \Delta(h)$, it follows that
    	\begin{align*} 
    		\bigg| \int_{0}^1 D_k(t) e^{-2\pi i n t} dt\bigg|  =\,& \big|e^{2\pi i n h}-1\big|^{-1}\cdot  \bigg| \int_0^1 \big[D_k(t+h)-D_k(t)\big] e^{-2\pi i n t} dt\bigg|\\
    		\leq &\int_0^1 \big| D_k(t+h) - D_k(t)\big|dt.
    	\end{align*}
			The proof is completed.
    \end{proof}

		Next, we choose $h$ appropriately to estimate the high-frequency Fourier coefficients. For fixed $k\geq 1$ and $m\geq 1$, we set $$h_{k,m}=2^{-m-2}\ell_k.$$ Recall the definition \eqref{def-Dkm} of $\Delta_{k,m}$.  Note that for any $n\in \Delta_{k,m},$ one has   $
     \pi/2\le 2\pi n h_{k,m}< \pi$ and hence 
     \begin{align}\label{eq-far-1}
     |e^{2\pi i nh_{k,m}}-1|\ge \sqrt{2}>1, \quad n\in \Delta_{k,m}
     \end{align}
     Comparing \eqref{eq-far-1} with the definition of $\Delta(h)$ in Lemma \ref{lemma-translation}, one has 
     \[
     \Delta_{k,m}\subset \Delta(h_{k,m}).
     \]
      Therefore, it follows from Lemma \ref{lemma-translation} that 
     \begin{align}\label{eq-high-frequency}
     \sup_{n\in \Delta_{k,m}}\bigg|\int_0^1 D_k(t) e^{-2\pi i n t} dt\bigg|\le \int_0^1 \big| D_k(t+h_{k,m}) - D_k(t)\big|dt,\quad m\geq 1.
	\end{align}
     
     Applying \eqref{eq-crude-bound} to bound the low-frequency terms ($m=0$) and \eqref{eq-high-frequency} to handle the high-frequency terms ($m\geq 1$), we derive from \eqref{eq-dyadic-frequency} the final pointwise estimate
     \begin{align*}
     \Bigg\{\sum_{n=1}^\infty n^{\frac{\tau q}{2}} \bigg|\int_0^1 D_k(t) e^{-2\pi i n t} dt\bigg|^q \Bigg\}^{p/q}
	 \lesssim \Big(\ell_k^{-1}\Big)^{\frac{\tau p}{2} + \frac{p}{q}}\cdot \bigg( \int_0^1 |D_k(t)|dt \bigg)^p&\\
	 +\sum_{m=1}^\infty \Big(2^m \ell_k^{-1}\Big)^{\frac{\tau p}{2} + \frac{p}{q}} \cdot \bigg(\int_0^1 \big| D_k(t+h_{k,m}) - D_k(t)\big|dt\bigg)^p&.
	\end{align*}
    Taking expectation on both sides, we obtain
     \begin{align}\label{eq:E-pw-estimate}
     \begin{split}
     	 \E\Bigg[\bigg\{\sum_{n=1}^\infty n^{\frac{\tau q}{2}} \bigg|\int_0^1  D_k(t) e^{-2\pi i n t} dt\bigg|^q  \bigg\}^{p/q} \Bigg]
		 \lesssim \Big(\ell_k^{-1}\Big)^{\frac{\tau p}{2} + \frac{p}{q}}\cdot \E\bigg[\bigg( \int_0^1 |D_k(t)|dt \bigg)^p\bigg]  & 
		 \\
		  +\sum_{m=1}^\infty \Big(2^m \ell_k^{-1}\Big)^{\frac{\tau p}{2} + \frac{p}{q}} \cdot \E\bigg[\bigg(\int_0^1 \big| D_k(t+h_{k,m}) - D_k(t)\big|dt\bigg)^p\bigg]&.
		 \end{split}
     \end{align}
     Therefore, as we mentioned in \S \ref{s1.3}, our next step is to provide effective estimates for the $p$-moments of the following random variables $$\int_0^1 |D_k(t)|dt \, \text{ and } \, \int_0^1 | D_k(t+h_{k,m}) - D_k(t)|dt.$$ 
	\subsection{Moment estimates for the $L^1$-norm and $L^1$-modulus of continuity of $D_k$.}

	Recall that
	\[
	D_k(t)=M_{k}(t)-M_{k-1}(t)=M_{k-1}(t)(X_k(t)-1).
	\] 
	This subsection is devoted to the proof of the following proposition.
	\begin{proposition}\label{prop-p-moment-Dk} The following statements hold true: 
		\begin{enumerate}[label=(\roman*),font=\normalfont]
		\item \label{item:prop-p-moment-difference} Uniformly for $k\geq 1$ and $h>0$, we have
		\[
      \E\bigg[\bigg( \int_0^1 |D_k(t+h)-D_k(t)|dt \bigg)^p\bigg]\lesssim \Big(h^p+hk\ell_k^p\Big)\cdot \prod_{j=1}^{k-1}(1-\ell_j)^{1-p}.
		\]
		\item \label{item:prop-p-moment-Dk}Uniformly for $k\geq 1$, we have $$\E\bigg[\bigg( \int_0^1 |D_k(t)|dt \bigg)^p\bigg]\lesssim  \ell_k^p\cdot \prod_{j=1}^{k-1}(1-\ell_j)^{1-p}.$$
		\end{enumerate}
	\end{proposition}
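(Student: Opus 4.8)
Both parts will follow, after inserting $D_k(t)=M_{k-1}(t)(X_k(t)-1)$, from Minkowski's integral inequality applied to the conditional expectation $\E^{(k)}[\,\cdot\,]:=\E[\,\cdot\mid\sigma(X_k)]$, together with three elementary facts about the independent factors $X_j$, which I would record first (they are the content of Proposition~\ref{prop-basic-facts}): for all $t,h$ and all $j,k$,
\[
\E\big[X_j(t)^p\big]=(1-\ell_j)^{1-p},\qquad A_k(p):=\E\big[M_{k-1}(t)^p\big]=\prod_{j=1}^{k-1}(1-\ell_j)^{1-p}\quad(\text{independent of }t);
\]
the deterministic identities $\int_0^1|X_k(t)-1|\,dt=2\ell_k$ and $\int_0^1|X_k(t+h)-X_k(t)|\,dt=(1-\ell_k)^{-1}|I_k\triangle(I_k-h)|\le 2(1-\ell_k)^{-1}h$; and the translation-invariant estimate
\[
B_k(p):=\E\big[|M_{k-1}(t+h)-M_{k-1}(t)|^p\big]\lesssim k\,h\prod_{j=1}^{k-1}(1-\ell_j)^{1-p}.
\]
Here $M_{k-1}(\cdot)$ and $M_{k-1}(\cdot+h)-M_{k-1}(\cdot)$ are independent of $X_k$, so their conditional $p$-th moments under $\E^{(k)}$ are the deterministic constants $A_k(p)$ and $B_k(p)$, whereas $|X_k(\cdot)-1|$ and $|X_k(\cdot+h)-X_k(\cdot)|$ are $\sigma(X_k)$-measurable. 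Throughout I assume $0<h<1$ (periodicity reduces the general case to this, and $h\ge1$ follows trivially from part (ii)), and I dispose of $k=1$ separately, where every relevant quantity is deterministic.

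\textbf{Deducing (ii) and (i).} For (ii) I write $\int_0^1|D_k(t)|\,dt=\int_0^1 M_{k-1}(t)\,|X_k(t)-1|\,dt$ and apply Minkowski's integral inequality with respect to $\E^{(k)}$:
\[
\Big(\E^{(k)}\big[\big(\int_0^1 M_{k-1}(t)|X_k(t)-1|\,dt\big)^p\big]\Big)^{1/p}\le\int_0^1|X_k(t)-1|\,\big(\E[M_{k-1}(t)^p]\big)^{1/p}\,dt=2\ell_k\,A_k(p)^{1/p};
\]
raising to the $p$-th power and taking the full expectation gives (ii). For (i) the elementary identity $uv-u'v'=u(v-v')+(u-u')v'$ applied to $D_k(t)=M_{k-1}(t)(X_k(t)-1)$ gives
\[
|D_k(t+h)-D_k(t)|\le M_{k-1}(t+h)\,|X_k(t+h)-X_k(t)|+|M_{k-1}(t+h)-M_{k-1}(t)|\,|X_k(t)-1|;
\]
integrating in $t$, passing to the $L^p(\E^{(k)})$-norm, and using the triangle inequality followed by Minkowski's integral inequality (with $A_k(p)^{1/p}$ and $B_k(p)^{1/p}$ coming out as the conditional $L^p$-norms of $M_{k-1}(t+h)$ and of $M_{k-1}(t+h)-M_{k-1}(t)$) yields
\[
\Big(\E^{(k)}\big[\big(\int_0^1|D_k(t+h)-D_k(t)|\,dt\big)^p\big]\Big)^{1/p}\lesssim A_k(p)^{1/p}\,h+B_k(p)^{1/p}\,\ell_k.
\]
Raising to the $p$-th power, taking the full expectation, and substituting the two moment formulas gives $\E[(\int_0^1|D_k(t+h)-D_k(t)|\,dt)^p]\lesssim\big(h^p+h\,k\,\ell_k^p\big)\prod_{j<k}(1-\ell_j)^{1-p}$, which is (i).

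\textbf{The main obstacle: the estimate on $B_k(p)$.} The only genuinely delicate point is the bound on $B_k(p)$. The naive approach — telescoping $M_{k-1}(h)-M_{k-1}(0)=\sum_{i=1}^{k-1}\big(\prod_{j<i}X_j(h)\big)\big(X_i(h)-X_i(0)\big)\big(\prod_{i<j<k}X_j(0)\big)$ and applying the triangle inequality in $L^p$ (the three groups of factors in each term are mutually independent, and $\|X_i(h)-X_i(0)\|_{L^p}\lesssim h^{1/p}$) — produces only $B_k(p)\lesssim\big(k\,h^{1/p}\big)^p\prod_{j<k}(1-\ell_j)^{1-p}=k^p h\prod_{j<k}(1-\ell_j)^{1-p}$, and the spurious factor $k^{p-1}$ is too large for the series over $k$ in \eqref{eq-goal-4} to converge; this loss is the crux of the difficulty. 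To recover the correct linear growth in $k$, I would instead exploit that $M_{k-1}(h)-M_{k-1}(0)$ is supported on the event $\mathcal B:=\bigcup_{j=1}^{k-1}\{X_j(h)\ne X_j(0)\}$ — off $\mathcal B$ all factors coincide, so $M_{k-1}(h)=M_{k-1}(0)$ — whence
\[
|M_{k-1}(h)-M_{k-1}(0)|^p\le2^{p-1}\big(M_{k-1}(h)^p+M_{k-1}(0)^p\big)\sum_{i=1}^{k-1}\indi_{\{X_i(h)\ne X_i(0)\}}.
\]
Since $\indi_{\{X_i(h)\ne X_i(0)\}}$ is a function of $\omega_i$ alone, it is independent of $\{X_j:j\ne i\}$, so
\[
\E\big[M_{k-1}(0)^p\,\indi_{\{X_i(h)\ne X_i(0)\}}\big]=\Big(\prod_{j<k,\,j\ne i}(1-\ell_j)^{1-p}\Big)\,\E\big[X_i(0)^p\,\indi_{\{X_i(h)\ne X_i(0)\}}\big],
\]
and a one-line computation of the joint law of $(\indi_{I_i}(0),\indi_{I_i}(h))$ — on $\{X_i(h)\ne X_i(0)\}$ exactly one of $X_i(0),X_i(h)$ equals $(1-\ell_i)^{-1}$ and the other vanishes, and $\PP(X_i(0)\ne0,\,X_i(h)=0)\le h$ — gives $\E[X_i(0)^p\indi_{\{X_i(h)\ne X_i(0)\}}]\le h\,(1-\ell_i)^{-p}$. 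Hence each of the $k-1$ summands is $\lesssim h\prod_{j<k}(1-\ell_j)^{1-p}$; the same bound holds with $M_{k-1}(h)^p$ in place of $M_{k-1}(0)^p$; and summing over $i$ gives $B_k(p)\lesssim k\,h\prod_{j<k}(1-\ell_j)^{1-p}$, completing the proof.
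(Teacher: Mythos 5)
Your overall framework---inserting $D_k=M_{k-1}\cdot(X_k-1)$, splitting $D_k(t+h)-D_k(t)$ into the two products, applying Minkowski's integral inequality against $\E^{(k)}=\E[\cdot\mid\sigma(X_k)]$, and reducing everything to the translation-invariant constants $A_k(p),B_k(p)$ and the two deterministic integrals of $X_k$---is exactly the paper's. The one place you genuinely diverge is the estimate $B_k(p)\lesssim kh\prod_{j<k}(1-\ell_j)^{1-p}$, which is the only delicate point (you correctly identify that a naive $L^p$ triangle inequality on the telescoping sum loses a factor $k^{p-1}$).

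The paper closes this gap by a structural observation: since $M_{k-1}(t)\prod_{j<k}(1-\ell_j)=\prod_{j<k}\indi_{I_j^c}(t)$ is $\{0,1\}$-valued, the normalized difference $|M_{k-1}(t+h)-M_{k-1}(t)|\prod_{j<k}(1-\ell_j)$ is also $\{0,1\}$-valued, so its $p$-th power equals its first power. This lets them telescope the \emph{first} power, take expectations term by term using $\E[|\indi_{I_r^c}(t+h)-\indi_{I_r^c}(t)|]\le 2h$, and get the linear factor $k$ for free. Your alternative is the bound $|M_{k-1}(h)-M_{k-1}(0)|^p\le 2^{p-1}(M_{k-1}(h)^p+M_{k-1}(0)^p)\sum_i\indi_{\{X_i(h)\ne X_i(0)\}}$, exploiting that the difference vanishes off the union event, followed by the one-coordinate computation $\E[X_i(0)^p\indi_{\{X_i(h)\ne X_i(0)\}}]\le h(1-\ell_i)^{-p}$. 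Both work and the arithmetic lands in the same place; the paper's device is cleaner because it reduces a $p$-th moment to a first moment outright and is tailored to the indicator structure, while your convexity-plus-support argument is more generic and would survive if the factors $X_j$ were bounded but not two-valued. Your proof is correct; just note that your probability bound $\PP(X_i(0)\ne 0,\,X_i(h)=0)\le h$ (and the symmetric one) uses $0<h<1$ and the periodic overlap computation, which you do flag.
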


	We collect in the following proposition several basic facts necessary for the proof of Proposition~\ref{theorem-submain}. It should be emphasized that, albeit elementary, these facts are fundamental to the Fourier decay of the multiplicative chaos measure $\mu_{\text{RC}}$. For the reader's convenience, we recall that $$X_k(t)=\frac{1-\indi_{I_k}(t)}{1-\ell_k}, \quad M_k(t)=\prod_{j=1}^k X_j(t).$$

	\begin{proposition}\label{prop-basic-facts}
		 For any $1<p\le 2$, and any $k\geq 1$, the following statements hold:
		
		\begin{enumerate}[label=(\roman*),font=\normalfont]
			\item \label{item:basic-facts-p-moment-Xk} 
			For any $t\in\T$, we have $$\E\big[|X_k(t)|^p\big]=(1-\ell_k)^{1-p}.$$
			\item \label{item:basic-facts-L1-Xk-1}
			For an arbitrary (fixed) sample path, we have $$\int_0^1 |X_k(t)-1|dt =2\ell_k.$$
			\item \label{item:basic-facts-p-moment-Mk} 
			For any $t\in\T$, we have $$\E\big[|M_k(t)|^p\big]=\prod_{j=1}^k (1-\ell_j)^{1-p}.$$
			\item \label{item:basic-facts-L1-difference} 
			For an arbitrary (fixed) sample path, we have $$\int_0^1 \Big|X_k(t+h)-X_k(t)\Big| dt \leq \frac{2h}{1-\ell_k}, \quad \forall h>0.$$
			\item \label{item:basic-facts-p-moment-difference} 
			For any $t\in \T$ and $h>0$, we have $$\E\bigg[\Big|M_{k}(t+h)-M_k(t)\Big|^p\bigg]
				\leq\frac{2hk}{1-\ell_1}\prod_{j=1}^k (1-\ell_j)^{1-p}.$$
		\end{enumerate}
	\end{proposition}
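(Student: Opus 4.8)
The plan is to handle the five items in increasing order of difficulty: items \ref{item:basic-facts-p-moment-Xk}--\ref{item:basic-facts-L1-difference} are elementary consequences of the two-valued structure of $X_k$, while item \ref{item:basic-facts-p-moment-difference} needs one extra device. For \ref{item:basic-facts-p-moment-Xk} I would simply note that, with $t$ fixed, $X_k(t)$ equals $0$ with probability $\ell_k$ and $(1-\ell_k)^{-1}$ with probability $1-\ell_k$, so $\E[|X_k(t)|^p]=(1-\ell_k)^{1-p}$; item \ref{item:basic-facts-p-moment-Mk} then follows by multiplying over $j=1,\dots,k$ and using independence of the $X_j$'s. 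For \ref{item:basic-facts-L1-Xk-1}, writing $X_k(t)-1=(\ell_k-\indi_{I_k}(t))/(1-\ell_k)$ shows that $|X_k(t)-1|$ equals $1$ on $I_k$ and $\ell_k/(1-\ell_k)$ off $I_k$, whence $\int_0^1|X_k(t)-1|\,dt=\ell_k+\ell_k=2\ell_k$. For \ref{item:basic-facts-L1-difference}, the identity $\indi_{I_k}(t+h)=\indi_{I_k-h}(t)$ gives, pathwise, $|X_k(t+h)-X_k(t)|=(1-\ell_k)^{-1}\indi_{I_k\triangle(I_k-h)}(t)$, so $\int_0^1|X_k(t+h)-X_k(t)|\,dt=(1-\ell_k)^{-1}|I_k\triangle(I_k-h)|$; since (by periodicity it suffices to treat $0<h<1$) the symmetric difference of an arc of length $\ell_k$ with its translate has measure at most $2\min(h,\ell_k)\le 2h$, this finishes the item.

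The substantive part is item \ref{item:basic-facts-p-moment-difference}, and I would build the estimate from three ingredients. First, since $M_k(t+h),M_k(t)\ge0$, I would invoke the elementary inequality $|a-b|^p\le|a-b|(a^{p-1}+b^{p-1})$ for $a,b\ge 0$ and $1<p\le 2$ (valid because $|a-b|\le\max(a,b)$ forces $|a-b|^{p-1}\le\max(a,b)^{p-1}\le a^{p-1}+b^{p-1}$), to get
\[
\E\big[|M_k(t+h)-M_k(t)|^p\big]\le\E\big[M_k(t)^{p-1}|M_k(t+h)-M_k(t)|\big]+\E\big[M_k(t+h)^{p-1}|M_k(t+h)-M_k(t)|\big].
\]
Second, I would telescope $M_k(t+h)-M_k(t)=\sum_{j=1}^k\big(\prod_{i<j}X_i(t+h)\big)\big(X_j(t+h)-X_j(t)\big)\big(\prod_{i>j}X_i(t)\big)$, apply the triangle inequality, multiply through by $M_k(t)^{p-1}=\prod_{i=1}^k X_i(t)^{p-1}$, and use independence of $X_1,\dots,X_k$ to factor the $j$-th summand as $\big(\prod_{i<j}\E[X_i(t+h)X_i(t)^{p-1}]\big)\,\E\big[|X_j(t+h)-X_j(t)|X_j(t)^{p-1}\big]\,\big(\prod_{i>j}\E[X_i(t)^p]\big)$. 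Third, I would read off the single-coordinate expectations from the two-valued structure: $\E[X_i(t)^p]=(1-\ell_i)^{1-p}$; $\E[X_i(t+h)X_i(t)^{p-1}]=(1-\ell_i)^{-p}\PP(t\notin I_i,\,t+h\notin I_i)\le(1-\ell_i)^{1-p}$; and $\E\big[|X_j(t+h)-X_j(t)|X_j(t)^{p-1}\big]=(1-\ell_j)^{-p}\PP(t\notin I_j,\,t+h\in I_j)$ with $\PP(t\notin I_j,\,t+h\in I_j)\le h$ (because the $t$-independent quantity $\PP(\indi_{I_j}(t)\ne\indi_{I_j}(t+h))$ integrates in $t$ to $\E[|I_j\triangle(I_j-h)|]\le 2h$ by item \ref{item:basic-facts-L1-difference}, and splits into the two symmetric cases). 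Collecting these, the $j$-th summand is at most $(1-\ell_j)^{-1}h\prod_{i=1}^k(1-\ell_i)^{1-p}\le\tfrac{h}{1-\ell_1}\prod_{i=1}^k(1-\ell_i)^{1-p}$ (using $\ell_j\le\ell_1$), so summing the $k$ summands bounds the first expectation on the right by $\tfrac{hk}{1-\ell_1}\prod_{i=1}^k(1-\ell_i)^{1-p}$; the second expectation is handled identically, either by telescoping in the opposite order or via the distributional symmetry $(M_k(t),M_k(t+h))\overset{d}{=}(M_k(t+h),M_k(t))$ induced by the reflection $\omega_j\mapsto 2t+h-\ell_j-\omega_j$. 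Adding the two contributions yields precisely $\tfrac{2hk}{1-\ell_1}\prod_{i=1}^k(1-\ell_i)^{1-p}$.

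I expect the only genuinely delicate point to be obtaining the \emph{linear}, rather than $k^p$, dependence on $k$ in item \ref{item:basic-facts-p-moment-difference}: a naive application of Minkowski's inequality to the telescoping sum produces only a bound of order $k^p\prod_{i=1}^k(1-\ell_i)^{1-p}$, which is too lossy for the estimate of $\E[(\int_0^1|D_k(t+h)-D_k(t)|\,dt)^p]$ in Proposition~\ref{prop-p-moment-Dk}. The point of peeling off a single difference factor via $|a-b|^p\le|a-b|(a^{p-1}+b^{p-1})$ and absorbing the leftover power $M_k(t)^{p-1}$ coordinatewise is exactly that every factor other than the $j$-th then contributes a clean constant, so that the problem reduces — by linearity of expectation and the triangle inequality, with no $p$-th powers of sums — to a sum of $k$ uniformly controlled terms; the bound $\PP(t\notin I_j,\,t+h\in I_j)\le h$, the probabilistic shadow of item \ref{item:basic-facts-L1-difference}, is the second small but essential input.
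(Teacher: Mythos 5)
Your treatment of items \ref{item:basic-facts-p-moment-Xk}--\ref{item:basic-facts-L1-difference} matches the paper essentially verbatim: the two-valued structure of $X_k$ is exactly how the paper computes these, and your symmetric-difference computation for \ref{item:basic-facts-L1-difference} is identical to the paper's display \eqref{set-diff}.

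For item \ref{item:basic-facts-p-moment-difference}, your argument is correct but genuinely different from the paper's. The paper's device for killing the $p$-th power is sharper and more direct: since $(1-\ell_j)M_k$ is a product of indicators, the difference $\prod_j\indi_{I_j^c}(t+h)-\prod_j\indi_{I_j^c}(t)$ is itself $\{0,\pm 1\}$-valued, so $|\cdot|^p=|\cdot|$ exactly; they then telescope this \emph{unweighted} difference and take expectations, with $\E[|\indi_{I_r^c}(t+h)-\indi_{I_r^c}(t)|]\le 2h$ supplying the factor of $2h$. You instead use the general inequality $|a-b|^p\le|a-b|(a^{p-1}+b^{p-1})$, telescope the weighted difference, and distribute the residual power $M_k(t)^{p-1}$ coordinatewise; the $0$--$1$ structure is still used, just later (to simplify $X_i(t)^{p-1}$ and to compute $\E[X_i(t+h)X_i(t)^{p-1}]$ and $\E[|X_j(t+h)-X_j(t)|X_j(t)^{p-1}]$), and the factor of $2$ comes from the two symmetric weights $M_k(t)^{p-1}$ and $M_k(t+h)^{p-1}$ rather than from the symmetric difference. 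Both routes share the essential insight you identify at the end --- avoid raising a sum of $k$ terms to the $p$-th power, since that costs $k^p$ --- and both land on exactly the same constant $\tfrac{2hk}{1-\ell_1}\prod_{j=1}^k(1-\ell_j)^{1-p}$. Your approach has the merit of resting on a textbook inequality valid for arbitrary nonnegative $a,b$, and so could in principle generalize beyond indicator-built martingales; the paper's approach is more specific to the model but saves the step of introducing and later absorbing the $(p-1)$-power weight. One small remark: your symmetry argument for the second expectation via the reflection $\omega_j\mapsto 2t+h-\ell_j-\omega_j$ is correct but slightly heavy; it is simpler to just note that the telescoping identity with the roles of $t$ and $t+h$ swapped gives the second term by an identical computation.
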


	\begin{proof}
	The identities in \ref{item:basic-facts-p-moment-Xk}, \ref{item:basic-facts-L1-Xk-1} and \ref{item:basic-facts-p-moment-Mk} can be easily derived from the definition and the independence of $(X_k)_{k\geq 1}$. We now turn to the proof of \ref{item:basic-facts-L1-difference}. By definition of $X_k$, we have 
	\begin{align*}
		\int_0^1 \Big|X_k(t+h)-X_k(t)\Big| dt 
		=\frac{1}{1-\ell_k}\cdot \int_0^1  \Big|\indi_{I_k}(t+h)-\indi_{I_k}(t)\Big| dt.
	\end{align*}
	Note that the last integral equals the Lebesgue measure of the symmetric difference $I_k\Delta (I_k-h)$, which is bounded above by $2h$: 
	\begin{align}\label{set-diff}
	\int_0^1  \Big|\indi_{I_k}(t+h)-\indi_{I_k}(t)\Big| dt=\int_0^1 \Big|\indi_{I_k-h}(t)-\indi_{I_k}(t)\Big|dt=\int_0^1 \indi_{I_k\Delta (I_k-h)}(t)dt \leq 2h.
	\end{align}
	 This proves \ref{item:basic-facts-L1-difference}.  
	
	Now we turn to prove \ref{item:basic-facts-p-moment-difference} . Observe that the modulus of the difference between two indicator functions takes only the values $0$ and $1$, hence
	\begin{align*}
		\Big|M_{k}(t+h)-M_k(t)\Big|^p \cdot \prod_{j=1}^k (1-\ell_j)^{p} 
		&=\Big| \prod_{j=1}^k \indi_{I_j^c}(t+h) - \prod_{j=1}^k \indi_{I_j^c}(t) \Big|^p
		\\
		\text{(by the $0$-$1$ valued feature)} &		=\Big| \prod_{j=1}^k \indi_{I_j^c}(t+h) - \prod_{j=1}^k \indi_{I_j^c}(t) \Big|.
	\end{align*}
	Combining this with the elementary identity $$\prod_{j=1}^k a_j - \prod_{j=1}^k b_j= \sum_{r=1}^k  \Big(\prod_{1\leq j < r}a_j \Big) \Big(a_r-b_r\Big)  \Big(\prod_{r<j\leq k} b_j \Big),$$ we deduce that
	\begin{align*}
		\Big|M_{k}(t+h)-M_k(t)\Big|^p \cdot \prod_{j=1}^k (1-\ell_j)^{p}\leq &\sum_{r=1}^k \Big( \prod_{1\leq j< r} \indi_{I_j^c}(t+h) \Big) \Big|\indi_{I_r^c}(t+h)-\indi_{I_r^c}(t) \Big| \Big( \prod_{r< j\leq k} \indi_{I_j^c}(t)\Big).
	\end{align*}
	Taking expectations, and using the independence of $I_j$,
	we have 
	\begin{align*} 
		&\E\bigg[\Big|M_{k}(t+h)-M_k(t)\Big|^p\bigg] \cdot \prod_{j=1}^k (1-\ell_j)^{p}
		\\
		\leq & \sum_{r=1}^k \Big( \prod_{1\leq j < r} \E\big[ \indi_{I_j^c}(t+h) \big]  \Big) \cdot \E \Big[  \big|\indi_{I_r^c}(t+h)-\indi_{I_r^c}(t) \big|\Big] \cdot \Big( \prod_{r< j\leq k}  \E \big[\indi_{I_j^c}(t)\big] \Big) .
	\end{align*}

We apply the identity $|\indi_I(u)-\indi_{I'}(u)|=\indi_{I\Delta I'}(u)$ and the geometric bound used in \eqref{set-diff} again to obtain that for any $t\in \T$, 
\begin{align*}
\E\bigg[ \Big|\indi_{I_r^c}(t+h)-\indi_{I_r^c}(t)\Big| \bigg] &= \E\bigg[\Big|\indi_{I_r}(t+h)-\indi_{I_r}(t)\Big|\bigg]\\
&=\int_0^1 \Big|\indi_{(u,u+\ell_r)}(t+h)-\indi_{(u,u+\ell_r)}(t)\Big|du\\
&=\int_0^1 \Big|\indi_{(t+h-\ell_r,t+h)}(u)-\indi_{(t-\ell_r,t)}(u)\Big|du\le 2h.
\end{align*}
Combining this with 
\[
\E\big[ \indi_{I_j^c}(t+h) \big] = \E\big[ \indi_{I_j^c}(t) \big] = 1 - \ell_j, \quad \forall t \in \T \an h>0, 
\]
we conclude that
\begin{align*} 
		&\E\bigg[\Big|M_{k}(t+h)-M_k(t)\Big|^p\bigg] \cdot \prod_{j=1}^k (1-\ell_j)^{p}\leq 2h \sum_{r=1}^k \prod_{j\neq r} (1-\ell_j) \leq 2hk \prod_{j=2}^k (1-\ell_j),
\end{align*}	
where we used the fact 
	\[
	\prod_{j\neq r} (1-\ell_j) \le \prod_{j =2 }^k (1-\ell_j), \quad \forall 1\le r \le k. 
	\]
	This is equivalent to the desired inequality in \ref{item:basic-facts-p-moment-difference}.
	\end{proof} 

	Now we are ready to establish the moment estimates in Proposition~\ref{prop-p-moment-Dk}. 

	\begin{proof}[Proof of Proposition~\ref{prop-p-moment-Dk}] We first prove Proposition~\ref{prop-p-moment-Dk}-\ref{item:prop-p-moment-difference}. We observe the identity:
		\begin{align*} 
		D_k(t+h)-D_k(t)=&M_{k-1}(t+h)\cdot \big(X_k(t+h)-1\big)-M_{k-1}(t)\cdot \big(X_k(t)-1\big)\\
		=&M_{k-1}(t+h)\cdot \big( X_k(t+h)-X_k(t)\big)\\
		&+\big(M_{k-1}(t+h)-M_{k-1}(t)\big) \cdot \big( X_k(t)-1\big),
    \end{align*}
  which gives the bound
    \begin{align*}
    \int_0^1 |D_k(t+h)-D_k(t)|dt\leq &\int_0^1 |M_{k-1}(t+h)|\cdot |X_k(t+h)-X_k(t)|dt\\
    &+\int_0^1 |M_{k-1}(t+h)-M_{k-1}(t)|\cdot | X_k(t)-1|dt.
    \end{align*} 
Hence 
    \begin{align*}
    	\E\bigg[\bigg(\int_0^1 \big| D_k(t+h) - D_k(t)\big|dt\bigg)^p\bigg]&\lesssim\E\bigg[\bigg(\int_0^1 |M_{k-1}(t+h)|\cdot |X_k(t+h)-X_k(t)|dt\bigg)^p\bigg]\\
    	&\quad +\E\bigg[\bigg(\int_0^1 |M_{k-1}(t+h)-M_{k-1}(t)|\cdot | X_k(t)-1|dt\bigg)^p\bigg]\\
			& =:T_1 +T_2. 
    \end{align*}

		Now we proceed to the estimate for $T_1$. For simplicity, we denote by $\E^{(k)}$ the conditional expectation with respect to the $\sigma$-algebra  $\sigma(X_k) = \sigma(\omega_k)$ generated by the single random variable $X_k$ (one should not confuse $\E^{(k)}[\,\cdot\,]$ with $\E[\,\cdot\,| \sigma(X_1,\dots, X_k)]$): 
		\[
		\E^{(k)}\big[\cdot\big] =  \E\big[\,\cdot\, |  \sigma(X_k)\big]. 
		\]  
		 By the Minkowski inequality for the conditional expectation $\E^{(k)}[\,\cdot\,]$, we have 
		\begin{alignat}{2}
    &\quad&&\E^{(k)}\bigg[ \bigg( \int_0^1 M_{k-1}(t+h)\cdot \big|X_k(t+h)-X_k(t)\big|  dt \bigg)^p\bigg]\nonumber \\
    &\leq&&\Bigg\{\int_0^1 \Big(\E^{(k)}\Big[ \big|M_{k-1}(t+h)\big|^p  \Big]\Big)^{1/p}\big|X_k(t+h)-X_k(t)\big|  dt \Bigg\}^{p}.\label{eq-conditional-on-k-1}
		\end{alignat}
		Since $M_{k-1}$ is independent with $X_k$, and by Proposition~\ref{prop-basic-facts}-\ref{item:basic-facts-p-moment-Mk}, we have $$\E^{(k)}\Big[ \big|M_{k-1}(t+h)\big|^p  \Big]=\E\Big[ \big|M_{k-1}(t+h)\big|^p  \Big]=\prod_{j=1}^{k-1} (1-\ell_j)^{1-p},\quad t\in \T, \,\,h>0.$$ Substituting this into \eqref{eq-conditional-on-k-1}, and then applying Proposition~\ref{prop-basic-facts}-\ref{item:basic-facts-L1-difference}, we see 
		\begin{align*}
		\E^{(k)}\bigg[ \bigg( \int_0^1 M_{k-1}(t+h)\cdot \big|X_k(t+h)-X_k(t)\big| dt \bigg)^p\bigg] & \leq \bigg( \frac{2h}{1-\ell_k}\bigg)^p\cdot \prod_{j=1}^{k-1} (1-\ell_j)^{1-p}
	\\
	& \lesssim h^p \prod_{j=1}^{k-1} (1-\ell_j)^{1-p}.\end{align*}
			 By taking expectations we obtain $$T_1=\E\bigg[ \bigg( \int_0^1 M_{k-1}(t+h)\cdot \big|X_k(t+h)-X_k(t)\big| dt \bigg)^p\bigg]
			\lesssim h^p \prod_{j=1}^{k-1} (1-\ell_j)^{1-p}.$$

		Now we proceed to the estimate for $T_2$. Similarly, again by the Minkowski inequality for the conditional expectation $\E^{(k)}[\,\cdot\,]= \E[\,\cdot\, | \sigma(X_k)]$, we have
		\begin{align} 
			&\E^{(k)}\bigg[ \bigg( \int_0^1 \big| M_{k-1}(t+h)-M_{k-1}(t) \big| \cdot \big|X_k(t)-1 \big|dt \bigg)^p \bigg]\nonumber \\
			\leq &\Bigg\{\int_0^1 \Big(\E^{(k)}\Big[ \big| M_{k-1}(t+h)-M_{k-1}(t) \big|^p\Big] \Big)^{1/p}\big|X_k(t)-1\big| dt \Bigg\}^{p}. \label{eq-conditional-on-k-2}
		\end{align}
		By the independence of $M_{k-1}$ and $X_k$, and Proposition~\ref{prop-basic-facts}-\ref{item:basic-facts-p-moment-difference}, we have $$\E^{(k)}\Big[ \big| M_{k-1}(t+h)-M_{k-1}(t) \big|^p\Big] = \E\Big[ \big| M_{k-1}(t+h)-M_{k-1}(t) \big|^p\Big] \leq \frac{2hk}{1-\ell_1} \prod_{j=1}^k (1-\ell_j)^{1-p}.$$
		Substituting this into \eqref{eq-conditional-on-k-2} and then applying Proposition~\ref{prop-basic-facts}-\ref{item:basic-facts-L1-Xk-1}, we see $$T_2=\E^{(k)}\bigg[ \bigg( \int_0^1 \big| M_{k-1}(t+h)-M_{k-1}(t) \big| \cdot \big|X_k(t)-1 \big|dt \bigg)^p \bigg] \lesssim h k \ell_k^p \cdot \prod_{j=1}^{k-1} (1-\ell_j)^{1-p}.$$

		Finally, we conclude from the estimates for $T_1$ and $T_2$ that $$\E\bigg[\bigg(\int_0^1 \big| D_k(t+h) - D_k(t)\big|dt\bigg)^p\bigg] \leq T_1 + T_2 \lesssim \Big(h^p+h k \ell_k^p \Big)\cdot \prod_{j=1}^{k-1} (1-\ell_j)^{1-p}.$$ This proves Proposition~\ref{prop-p-moment-Dk}-\ref{item:prop-p-moment-difference}. 
		
		The proof of Proposition~\ref{prop-p-moment-Dk}-\ref{item:prop-p-moment-Dk} is analogous to those estimates for $T_1$ and $T_2$, where the Minkowski inequality for conditional expectations, Proposition~\ref{prop-basic-facts}-\ref{item:basic-facts-L1-Xk-1} and Proposition~\ref{prop-basic-facts}-\ref{item:basic-facts-p-moment-Mk} are applied. We spare the reader the details. The proof of Proposition~\ref{prop-p-moment-Dk} is completed.
	\end{proof}

    \subsection{Conclude Proposition~\ref{theorem-submain}}
    For the clarity of the proof, we state separately the choice of the exponents. 
    \begin{lemma}\label{lem-pq}
    If $0\le D_\ell<1$ and $0\le \tau <1 - D_\ell$, then there exists $1<p\le 2\le q<\infty$ such that 
    \begin{align}\label{choose-pq}
    1-\frac{\tau p}{2}-\frac{p}{q}>0 \an -1+\tau \cdot \big(1-\frac{p}{2}\big)-\bigg( \frac{1-\tau -D_\ell}{2}\bigg)(p-1)+ \frac{p}{q}<-1. 
    \end{align}
    \end{lemma}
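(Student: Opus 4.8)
The plan is to make the simplest possible choice: take $p=2$ and then let $q$ be large. First I would substitute $p=2$ into the two conditions in \eqref{choose-pq}. The first becomes $1-\tau-\tfrac{2}{q}>0$. Since $\tau<1-D_\ell\le 1$ (using $D_\ell\ge 0$), we have $1-\tau>0$, so this holds for every $q>\tfrac{2}{1-\tau}$.

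Next I would treat the second condition with $p=2$. The term $\tau\bigl(1-\tfrac{p}{2}\bigr)$ vanishes at $p=2$, while $\bigl(\tfrac{1-\tau-D_\ell}{2}\bigr)(p-1)=\tfrac{1-\tau-D_\ell}{2}$, so the condition reduces to $-1-\tfrac{1-\tau-D_\ell}{2}+\tfrac{2}{q}<-1$, that is, $\tfrac{2}{q}<\tfrac{1-\tau-D_\ell}{2}$. The hypothesis $\tau<1-D_\ell$ gives $1-\tau-D_\ell>0$, so this holds for every $q>\tfrac{4}{1-\tau-D_\ell}$.

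Therefore it suffices to take $p=2$ together with any $q>\max\bigl\{2,\ \tfrac{2}{1-\tau},\ \tfrac{4}{1-\tau-D_\ell}\bigr\}$; such a $q$ is finite and $\ge 2$, so the pair $(p,q)$ satisfies all the constraints $1<p\le 2\le q<\infty$. There is no genuine obstacle: after fixing $p=2$ both inequalities are affine in $\tfrac{1}{q}$ and hold strictly in the limit $q\to\infty$, where their left-hand sides tend to $1-\tau>0$ and $-1-\tfrac{1-\tau-D_\ell}{2}<-1$ respectively. Should one prefer $p<2$ strictly for some later argument, these strict limiting values together with the continuity of both expressions in $(p,q)$ show that the inequalities persist for $p$ slightly below $2$ and $q$ large but finite; this refinement is not needed for the statement as written.
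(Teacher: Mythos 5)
Your proof is correct and is the same approach as the paper's: the paper likewise observes that since $1-\tau>0$ and $1-\tau-D_\ell>0$, one may take $p=2$ and $q$ sufficiently large. You have simply spelled out the explicit thresholds $q>\tfrac{2}{1-\tau}$ and $q>\tfrac{4}{1-\tau-D_\ell}$, which the paper leaves implicit.
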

    \begin{proof}
		Since $1- \tau >0$ and $1-\tau- D_\ell>0$, the above inequalities are satisfied by  taking $p=2$ and a sufficiently large $q$. 
    \end{proof}
    
	\begin{proof}[Proof of Proposition \ref{theorem-submain}] Assume that $D_\ell <1$.	Recall that, to conclude the proof of Proposition \ref{theorem-submain}, it suffices to prove that  for any $\tau<1-D_\ell$, there exist $1<p\leq 2\leq q<\infty$ such that 
    \begin{align*}
    	\sum_{k=1}^\infty \E \Bigg[ \Bigg\{\sum_{n=1}^\infty n^{\frac{\tau q}{2}} \bigg|\int_0^1 D_k(t) e^{-2\pi i n t} dt\bigg|^q \Bigg\}^{p/q}\Bigg]<\infty. 
    \end{align*}
    
    Now fix any $\tau$ with $0\le \tau < 1- D_\ell$.  By Lemma \ref{lem-pq}, we may choose exponents $p, q$ with $1<p \le 2 \le q <\infty$ satisfying the inequalities \eqref{choose-pq}. 
    Substituting estimates in Proposition~\ref{prop-p-moment-Dk} into \eqref{eq:E-pw-estimate}, we obtain 
     \begin{align} 
			&\E \Bigg[ \Bigg\{\sum_{n=1}^\infty n^{\frac{\tau q}{2}} \bigg|\int_0^1 D_k(t) e^{-2\pi i n t} dt\bigg|^q \Bigg\}^{p/q}\Bigg]\nonumber \\
			\lesssim & \Big(\ell_k^{-1}\Big)^{\frac{\tau p}{2}+\frac{p}{q}}\cdot \ell_k^p \prod_{j=1}^{k-1} (1-\ell_j)^{1-p}+ 
			\sum_{m=1}^\infty \Big(2^m \ell_k^{-1}\Big)^{\frac{\tau p}{2} + \frac{p}{q}} \Big( 2^{-mp} + 2^{-m} k \ell_k \Big)\cdot \ell_k^p \cdot \prod_{j=1}^{k-1} (1-\ell_j)^{1-p} \nonumber \\
			=& \Big(\ell_k^{-1}\Big)^{\frac{\tau p}{2}+\frac{p}{q}}\cdot \ell_k^p \cdot \prod_{j=1}^{k-1} (1-\ell_j)^{1-p} \cdot \bigg[ 1+ \sum_{m=1}^\infty  \Big(2^m \Big)^{\frac{\tau p}{2}+\frac{p}{q}} \Big(2^{-mp} + 2^{-m} k \ell_k \Big) \bigg]\nonumber \\
			=& \ell_k^{\,p-\frac{\tau p}{2}-\frac{p}{q}} \cdot \prod_{j=1}^{k-1} (1-\ell_j)^{1-p} \cdot \bigg[ 1+ \sum_{m=1}^\infty  \Big(2^{-m(p-\frac{\tau p}{2}-\frac{p}{q})} +k\ell_k 2^{-m (1-\frac{\tau p}{2}-\frac{p}{q})} \Big) \bigg].\label{eq-series-p-q-1}
		\end{align}
		We now incorporate the assumptions on the sequence $(\ell_k)_{k\geq 1}$. Since $\ell$ is non-increasing and  $$D_\ell=\limsup_{k\to \infty} \frac{\ell_1 + \cdots +\ell_k}{\log k}<1,$$ we have 
		\begin{align}\label{klk-log-bdd}
		k\ell_k\leq \sum_{j=1}^k \ell_j =O(\log k).
		\end{align}
		Moreover, it follows from \eqref{choose-pq} that 
	    \[
		p - \frac{\tau p}{2} - \frac{p}{q}>  1- \frac{\tau p}{2} - \frac{p}{q}>  0.
		\] 
		As a consequence, 
		\begin{align}\label{sum-to-log}
		\sum_{m=1}^\infty \Big(2^{-m(p-\frac{\tau p}{2}-\frac{p}{q})} +k\ell_k 2^{-m (1-\frac{\tau p}{2}-\frac{p}{q})} \Big) \lesssim 1+\log k.
		\end{align}
		 On the other hand,  take $C =  \sup_j (1 - \ell_j)^{-1} = (1-\ell_1)^{-1}>0$,  then  for all $j\ge 1$, 
\[
\frac{1}{1-\ell_j} =  1 + \ell_j + \frac{\ell_j^2}{1-\ell_j} \le  \exp \Big( \ell_j + \frac{ \ell_j^2}{1-\ell_j}\Big) \le   \exp \Big( \ell_j + C  \ell_j^2 \Big). 
\]
	Therefore, since $1<p\le 2$, one has 
\[
	 \prod_{j=1}^{k-1} (1-\ell_j)^{1-p} = \Big(\prod_{j=1}^{k-1} \frac{1}{1-\ell_j}\Big)^{p-1} \le \exp \Big ((p-1) \sum_{j=1}^{k-1}\ell_j + (p-1)C\sum_{j=1}^{k-1}\ell_j^2\Big).
\]
In view of \eqref{klk-log-bdd}, one has $\sum_{j=1}^\infty \ell_j^2 \lesssim \sum_{j=1}^\infty (\log j)^2 /j^2<\infty$ and hence, uniformly on $k$, 
\begin{align}\label{correct-upp-bdd}
 \prod_{j=1}^{k-1} (1-\ell_j)^{1-p} = \Big(\prod_{j=1}^{k-1} \frac{1}{1-\ell_j}\Big)^{p-1} \lesssim \exp \Big ((p-1) \sum_{j=1}^{k}\ell_j\Big).
 \end{align}
		Substituting  \eqref{sum-to-log} and \eqref{correct-upp-bdd} into \eqref{eq-series-p-q-1} and then summing over all $k\geq 1$, we arrive at 
		\begin{align}\label{eq-series-p-q-2}
			\sum_{k=1}^\infty \E \Bigg[ \Bigg\{\sum_{n=1}^\infty n^{\frac{\tau q}{2}} \bigg|\int_0^1 D_k(t) e^{-2\pi i n t} dt\bigg|^q \Bigg\}^{p/q}\Bigg] \lesssim \sum_{k=1}^\infty \ell_k^{\,p-\frac{\tau p}{2}-\frac{p}{q}} \cdot \exp\Big( \big(p-1\big)  \sum_{j=1}^k \ell_j\Big) \cdot \big( 1+\log k\big).
	\end{align} 
		
 Our assumption $0\le \tau <1- D_\ell$ implies that $D_\ell < 1 - \tau$ and hence 
 \[
 \limsup_{k\to \infty} \frac{\ell_1 + \cdots +\ell_k}{\log k} = D_\ell <\frac{1 - \tau + D_\ell}{2}. 
 \]
  Therefore, there exists $K\in \mathbb N_+$, such that $$\sum_{j=1}^k \ell_j \leq \bigg( \frac{1-\tau + D_\ell}{2}\bigg)\log k, \quad \forall k\geq K.$$ This implies the following estimates for the right hand side of \eqref{eq-series-p-q-2}:
	\begin{align*}
		&\sum_{k=K}^\infty \ell_k^{\,p-\frac{\tau p}{2}- \frac{p}{q}} \cdot \exp\Big( \big(p-1\big) \cdot \sum_{j=1}^k \ell_j\Big) \cdot \Big( 1+\log k\Big)\\
		\leq& \sum_{k=K}^\infty \ell_k^{\,p-\frac{\tau p}{2}- \frac{p}{q}}\cdot k^{(p-1)\frac{1-\tau+D_\ell}{2}}\cdot \Big( 1+\log k\Big)\\
		=&\sum_{k=K}^\infty k^{-1+\tau(1-\frac{p}{2})-\big( \frac{1-\tau -D_\ell}{2}\big)(p-1)+ \frac{p}{q}} \cdot \Big( k\ell_k\Big)^{p-\frac{\tau p}{2}- \frac{p}{q}}\cdot \Big( 1+\log k\Big)\\
		\lesssim &\sum_{k=K}^\infty k^{-1+\tau(1-\frac{p}{2})-\big( \frac{1-\tau -D_\ell}{2}\big)(p-1)+ \frac{p}{q}} \cdot \Big( 1+\log k\Big)^{p-\frac{\tau p}{2}- \frac{p}{q}+1 }.
	\end{align*} 
Since the exponents $p,q$ are chosen to satisfy the condition in Lemma \ref{lem-pq}, we have 
$$-1+\tau(1-\frac{p}{2})-\bigg( \frac{1-\tau -D_\ell}{2}\bigg)(p-1)+ \frac{p}{q}<-1.$$ 
This implies that   the series in \eqref{eq-series-p-q-2} converges and we complete the whole proof of Proposition~\ref{theorem-submain}.
	\end{proof}

\section{Proof of Theorem~\ref{theorem-main}}

Recall the definition of the multiplicative chaos measure $\mu_\RC^{\geqslant k}$ introduced in  \eqref{mu-RC-k}. 
   
   \begin{proposition}\label{theorem-submain-bis}
   Assuming \eqref{eq-non-Shepp}, then for any integer $k\ge 1$, 
   \begin{align*}
  \big|\widehat{\mu_{\RC}^{\geqslant k}}(n)\big|^2=O(n^{-\tau})~\text{ a.s. for any}~\tau<1-D_\ell.
   \end{align*}
   In particular, conditioned on the event $\{\mu_\RC^{\geqslant k} \ne 0\}$, the uncovered set $K_\ell$ is almost surely Salem with Fourier dimension \[\dim_{\mathcal F} K_\ell = \dim_{\mathcal H} K_\ell = 1-D_\ell.\]
   \end{proposition}

\begin{proof}
Fix any integer $k\ge 1$.  Note that  $\mu_{\RC}^{\geqslant k}$ coincides with that of the standard multiplicative chaos measures of Dvoretzky random covering associated to the sequence 
\[
\ell': = (\ell_k, \ell_{k+1}, \ell_{k+2},\ldots).
\] 
Then, by the definition \eqref{def-D-l}, one has $D_\ell = D_{\ell'}$.  
Moreover, since $\mu_\RC^{\geqslant k}$ is constructed in \eqref{mu-RC-k}  by using the  collection $(I_{j}'(\omega))_{j=1}^\infty$ of random arcs given by 
\[
(I_1'(\omega), I_2'(\omega), \ldots) := (I_k(\omega), I_{k+1}(\omega), \ldots), 
\]
one has $K_\ell (\omega)= K_{\ell'}(\omega)$ by the definition \eqref{def-K-l}. Therefore, the proof of Proposition~\ref{theorem-submain-bis} follows verbatim that of Proposition~\ref{theorem-submain}. 
\end{proof}

\begin{proof}[Proof of Theorem~\ref{theorem-main}]
By Lemma \ref{lem-zero-one-law}, the assumption \eqref{eq-non-Shepp} implies  
\[
\PP\Big( \bigcup_{k\ge 1} \big\{\mu_{\RC}^{\geqslant k} \ne 0\big\}\Big) = 1. 
\]
Hence Theorem~\ref{theorem-main} follows immediately from Proposition~\ref{theorem-submain-bis}. 
\end{proof}

\appendix
\section*{Appendix}

\renewcommand{\thesubsection}{\Alph{subsection}}
\setcounter{subsection}{0}

	\subsection{Proof of the implication $\eqref{eq-non-Shepp} \Longrightarrow D_\ell\le 1$}\label{appendix:non-shepp-D_l<=1}
    Let $s_k:= \ell_1 + \cdots + \ell_k$. Assume that $D_\ell > 1$, there exists $\delta > 0$ and an infinite subsequence $\{s_{k_j}\}_{j\geq 1}$ such that $s_{k_j} > (1+\delta)\log k_j$ for all $j\geq 1$. 
    
    Since the sequence $s_k$ is increasing, we have $$\sum_{k=1}^\infty k^{-2} \exp(s_k) \geq \sum_{k=k_j}^{2k_j} k^{-2} \exp(s_k) \geq (k_j+1) (2k_j)^{-2} \exp(s_{k_j})\geq k_j^{\delta}/4 \longrightarrow \infty$$ as $j$ tends to the infinity. The proof is completed.
    \subsection{Proof of the Remark~\ref{rem-0dim}}\label{appendix:Kahane-remark}
		In the critical case $D_\ell=1$, we consider $I^{\lambda}_k(\omega):=(\omega_k,\omega_k+\lambda \ell_k)$ with $\lambda<1$, and let $K^{\lambda}_\ell$ be the uncovered set for the random arcs $I^{\lambda}_k$. Clearly, one has the monotone coupling $I_k^\lambda \subset I_k$ and hence the inverse monotone coupling $K_\ell\subset K^{\lambda}_{\ell}$. Then it follows from Kahane's theorem (for non-critical cases) that $$\dim_{\mathcal H} K_\ell \leq \dim_{\mathcal H} K^{\lambda}_{\ell}=1-\lambda.$$ The proof is completed by letting $\lambda \to 1$.

    \subsection{Proof of Remark~\ref{remark:Tan-Dl<1-verification}}\label{appendix:Tan-Dl<1-verification}
		Let $s_k:=\ell_1+\cdots +\ell_k$. Since $\limsup_{k\to \infty} (s_k/\log k)<1$, there exist $\delta>0$ and $K_0\in \mathbb N_+$, such that $s_k\leq (1-\delta)\log k$ for all  $k\geq K_0.$ By the Abel summation, for any $K\ge K_0$, 
		\begin{align*}
			\sum_{k=K_0}^K (\ell_k-\ell_{k+1}) \exp(s_k)=&\ell_{K_0} \exp(s_{K_0}) - \ell_{K+1} \exp(s_{K}) + \sum_{k=K_0+1}^K \ell_k\cdot \big[\exp(s_k) - \exp(s_{k-1})\big]\\
			\leq&  \ell_{K_0} \exp(s_{K_0}) + \sum_{k=K_0+1}^K \ell_k \cdot\big[\exp(\ell_k)-1\big]\cdot\exp(s_{k-1}).
		\end{align*}
		Since $\ell$ is non-increasing, we have $k\ell_k \leq s_k$ and hence $\ell_k \leq\frac{s_k}{k}\leq \frac{\log k}{k}$ for all  $k\geq K_0.$ Furthermore, we may assume, without loss of generality, that $K_0$ is sufficiently large so that $[\exp(\ell_k) - 1] \leq 2\ell_k$ for all $k \geq K_0$ (this is possible since $\ell_k \to 0$). Therefore, for any integer $K>K_0$, it holds that
		\begin{align*}
			\sum_{k=K_0}^K (\ell_k-\ell_{k+1}) \exp(s_k) \leq&\ell_{K_0} \exp(s_{K_0}) + \sum_{k=K_0+1}^K \ell_k \cdot\big[\exp(\ell_k)-1\big]\cdot\exp(s_{k-1})\\
			\leq& \ell_{K_0} \exp(s_{K_0}) + 2\sum_{k=K_0+1}^K \ell_k^2 \cdot \exp(s_{k-1})\\
			\leq& \frac{\log K_0}{K_0}\cdot K_0^{1-\delta}+ 2 \sum_{k=K_0+1}^K \Big(\frac{\log k}{k}\Big)^2\cdot (k-1)^{1-\delta}\\
			\leq& K_0^{-\delta} \log K_0+2 \sum_{k=K_0+1}^\infty k^{-1-\delta} \log k< \infty.
		\end{align*}
		The proof is completed.


\end{document}